\begin{document} 
\newcommand{\A}{{\mathbb A}}
\newcommand{\Ad}{{\bold A}}
\newcommand{\Af}{{\bold A}_{f}}
\newcommand{\B}{{\mathbb B}}
\newcommand{\C}{{\mathbb C}}
\newcommand{\N}{{\mathbb N}}
\renewcommand{\O}{{\mathcal O}}
\newcommand{\Q}{{\mathbb Q}}
\newcommand{\Z}{{\mathbb Z}}
\renewcommand{\P}{{\mathbb P}}
\newcommand{\R}{{\mathbb R}}
\newcommand{\rc}{\subset}
\newcommand{\rank}{\mathop{rank}}
\newcommand{\trace}{\mathop{tr}}
\newcommand{\Spec}{\mathop{Spec}}
\newcommand{\dimc}{\mathop{dim}_{\C}}
\newcommand{\Lie}{\mathop{Lie}}
\newcommand{\Auto}{\mathop{{\rm Aut}_{\mathcal O}}}
\newcommand{\alg}[1]{{\mathbf #1}}
\newtheorem{theorem}{Theorem}[section]
\newtheorem{definition}[theorem]{Definition}
\newtheorem{corollary}[theorem]{Corollary}
\newtheorem{conjecture}[theorem]{Conjecture}
\newtheorem{example}[theorem]{Example}
\newtheorem{remark}[theorem]{Remark}
\newtheorem{remarks}[theorem]{Remarks}
\newtheorem{lemma}[theorem]{Lemma}
\newtheorem{proposition}[theorem]{Proposition}
\title[Entire Curves and Integral Sets]{%
Entire Curves, Integral Sets    and 
Principal Bundles.
}
\author {J\"org Winkelmann}
\begin{abstract}
We compare the behaviour of entire curves and integral sets, in
particular in relation to locally trivial fiber bundles,
algebraic groups and finite ramified covers over semi-abelian varieties.
\end{abstract}
\subjclass{14G05, 14G25, 14L10, 32H25}%
\address{%
J\"org Winkelmann \\
Mathematisches Institut\\
Universit\"at Bayreuth\\
Universit\"atsstra\ss e 30\\
D-95447 Bayreuth\\
Germany\\
}
\email{jwinkel@member.ams.org\newline\indent{\itshape Webpage: }%
http://btm8x5.mat.uni-bayreuth.de/\~{ }winkelmann/
}
\maketitle
\section{Survey}
It has been widely conjectured by Serge Lang and other mathematicians
that the diophantine behaviour of a projective
variety $V$ defined over a number
field $K$ is related to its complex-analytic properties.
The philosophy is that
for a projective variety defined over a number field $K$ infinite sets
of points rational over some finite extension of $K$ should
correspond to holomorphic entire curves.

For non-compact varieties there is an analoguos philosophy:
Entire curves should correspond to infinite sets of integral points.
(Loosily speaking integral points are points whose coordinates
are integers. See discussion below.)

\begin{conjecture}
Let $X$ be an algebraic variety defined over a number field $K$.

Then there exists a non-constant holomorphic map $f:\C\to X(\C)$
(with Zariski dense image) if and only if there is a finite
field extension $K'/K$ such that $X$ admits an infinite
(resp.~Zariski dense) integral set.
\end{conjecture}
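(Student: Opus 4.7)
The plan is to establish the two implications separately. Write (A) for ``infinite integral set $\Rightarrow$ entire curve'' and (B) for its converse. Both directions are deep and, taken together, amount to a version of Vojta's dictionary between Nevanlinna theory and Diophantine approximation. I would organize the argument around a fixed projective compactification $\bar X \supset X$ with boundary divisor $D = \bar X \setminus X$, a height function $h_D$ on the arithmetic side and the corresponding Nevanlinna counting function $N_D(f,r)$ on the analytic side, since integral sets are precisely those with bounded $h_D$ and entire curves into $X$ are precisely those with $N_D(f,r) \equiv 0$.

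For direction (A), the plan is: given an infinite integral set $S \rc X(K')$, embed $S$ into $\bar X(\C)$ via some archimedean place. Integrality forces $S$ to stay at bounded archimedean distance from $D$, but infiniteness forces accumulation somewhere in $\bar X(\C)$. One then wants to interpolate members of $S$ by algebraic curves of controlled degree (a Runge or Bombieri--Pila style argument) and apply a Brody reparametrization to extract a non-constant limit $f : \C \to X(\C)$. Zariski density of $S$ should propagate to Zariski density of the limit by a standard pigeonhole on hypersurface sections.

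For direction (B), given a non-constant $f : \C \to X(\C)$, the plan is to exploit the Nevanlinna estimates $N_D(f,r) = O(T_f(r)/\log r)$ together with a Vojta-type ``arithmetic approximation'' of $f$ by $K'$-rational maps from affine curves, in order to produce a sequence of points whose $D$-heights stay bounded; these would be the desired integral points, and Zariski density on the analytic side would transfer by a height-machine comparison.

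The main obstacle is that (B), in the stated generality, is essentially equivalent to Vojta's main conjecture and hence contains the Mordell conjecture, Faltings' theorem on subvarieties of (semi-)abelian varieties, and the still-open general case; direction (A) is comparably deep and would include, for example, a converse to Faltings. An honest proof attempt in full generality is therefore out of reach, and I would instead restrict to the classes where one side is genuinely tractable --- subvarieties of semi-abelian varieties (where Bloch--Ochiai on the analytic side and Faltings--Vojta on the arithmetic side already match up), ramified covers thereof, and principal bundles over such bases --- which is in fact the scope indicated by the abstract of the paper.
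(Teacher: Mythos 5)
You were right to stop: the statement you were given is the paper's \emph{Conjecture}~1.1, and the paper contains no proof of it. The author explicitly records that it is known only in dimension one (uniformization/Nevanlinna on the analytic side, Siegel and Faltings on the arithmetic side) and for subvarieties of abelian varieties, and that ``for arbitrary varieties in dimension two or higher this conjecture is wide open.'' The paper's actual results are partial and functorial --- lifting entire curves and integral sets through principal bundles, constructing Zariski dense entire curves and integral sets in algebraic groups, and transferring entire curves to integral sets for finite covers of semi-abelian varieties (Theorem~\ref{thm-ram}) --- exactly the restricted scope you identify in your last paragraph. So your overall assessment matches the paper's.

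That said, if your two sketches were ever to be read as the start of an actual argument, both have concrete gaps worth naming. In direction (A), an infinite integral set gives you bounded heights away from $D$, but no analytic family to which Brody reparametrization could apply: interpolating finitely many rational points by algebraic curves of controlled degree produces no sequence of holomorphic discs with diverging derivative, the Brody limit (if one existed) could be constant or could land inside the boundary divisor, and nothing forces the limit curve to be defined over anything or to lie in $X$ rather than $\bar X$. In direction (B), the inequality you invoke is vacuous: an entire curve $f:\C\to X(\C)$ omits $D$, so $N_D(f,r)\equiv 0$ carries no information, and there is no known mechanism (short of Vojta's conjectural dictionary itself) that converts an entire curve into a sequence of $K'$-rational points of bounded $D$-height --- that conversion \emph{is} the conjecture. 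Your concluding restriction to semi-abelian targets, their finite covers, and bundles over them is therefore not a simplification of a general proof but the only honest content, and it is precisely what the paper carries out, e.g.\ via \cite{NWY} plus Proposition~\ref{G-dense-integral} in the proof of Theorem~\ref{thm-ram}.
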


This is known to be true in dimension one: Using the uniformization
theorem or Nevanlinna theory one can prove that a complex curve
(``Riemann surface'') $X$ admits a non-constant holomorphic map
$f:\C\to X$ if and only if $X$ is biholomorphic to $\P_1$, $\C$,
$\C^*$ or an elliptic curve, while on the other side the theorem
of Siegel (for affine curves) and Faltings proof of
``Mordells conjecture'' (for projective curves) imply that these
are also the only algebraic curves for which the above stated
arithmetic
analogue holds.

It is also known to be true for subvarieties of abelian varieties.
In fact, for a subvariety $Z$ of an abelian variety $A$ both
properties are equivalent to the assumption that there is a
translate $W$ of an abelian subvariety of $A$ which is contained
in $Z$.

However, for arbitrary varieties in dimension two or higher this
conjecture is wide open.

The purpose of this article is to provide some small steps
towards this conjecture.
We will discuss fiber bundles, algebraic groups and ramified
coverings over semi-abelian varieties from this point of view.

We verify that integral points sets and entire curves
do share some common functorial properties concerning these
topics.

In particular we establish some lifting properties for fiber bundles
which we use in \S1.4 in a combination with a variant of
Jouanolou's trick to provide a new way to define the notion of an
``integral set''.

\subsection{Fiber bundles}
If $f:\C\to X$ is an entire curve and $E\to X$ a holomorphic fiber
bundle, then we can lift $f$ to a map $F:\C\to E$. We prove a strong 
complex-analytic
statement
in this direction and
an arithmetic analog.

\begin{theorem}
Let $G$ be an algebraic group, $X$ an algebraic variety
and $\pi:E\to X$ a $G$-principal bundle,
all defined over a number field $K$.

Then the following assertions hold true:

\begin{itemize}
\item[a)]
For every holomorphic map $f:\C\to X(\C)$ there exists a
holomorphic $F:\C\to E(\C)$ such that
\begin{enumerate}
\item $f=\pi\circ F$,
\item Assume furthermore that $G$ is connected.
Then $F$ can be chosen such that the
 Zariski closure of the image $F(\C)$ coincides with
the preimage $\pi^{-1}(\overline{f(\C)}^{Zar})$ of the Zariski
closure of the image of $f$.
\end{enumerate}

\item[b)]
Assume furthermore that the bundle $E\to X$ is locally trivial in the Zariski
topology.
Let $R$ be a set of integral points in $X(K)$.
Then there exists a finite field extension $K'/K$ and a set of
integral points $R'\subset E(K')$ such that
\begin{enumerate}
\item
$\pi(R')=R$.
\item
The Zariski closure of $R'$ in $E$ coincides with 
the preimage $\pi^{-1}(\overline{R}^{Zar})$ of the Zariski
closure of $R$.
\end{enumerate}
\end{itemize}
\end{theorem}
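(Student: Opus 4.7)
For the holomorphic lifting in part (a)(1), I would pull the principal bundle $\pi$ back along $f$, obtaining a holomorphic $G$-principal bundle $f^*E \to \C$ over a contractible Stein manifold. Grauert's form of the Oka principle classifies holomorphic $G$-bundles over Stein spaces by their underlying topological bundles; since $\C$ is contractible, $f^*E$ is topologically — and hence holomorphically — trivial. A trivialization furnishes a holomorphic section, which descends to the required lift $F_0:\C\to E$ with $\pi\circ F_0=f$.

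For (a)(2), set $Y:=\overline{f(\C)}^{Zar}$. Any lift of $f$ has the form $F(z)=F_0(z)\cdot g(z)$ for some holomorphic $g:\C\to G$. The set $F_0(\C)\cdot G=\pi^{-1}(f(\C))$ is $G$-invariant and projects to a Zariski-dense subset of $Y$, so its Zariski closure in $E$ is all of $\pi^{-1}(Y)$. It suffices, then, to realize this closure with a single entire curve. My plan is first to establish an auxiliary density statement: on any connected complex algebraic group $G$ there exists a holomorphic $g:\C\to G$ such that $z\mapsto(z,g(z))$ has Zariski-dense image in $\C\times G$. This I would build via the structure theorem, using exponentials with $\Q$-linearly independent slopes on the semi-abelian part and generic one-parameter subgroups on the semisimple quotient. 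Then I would transfer the density through the holomorphic action map $\alpha:\C\times G\to\pi^{-1}(Y)$, $(z,h)\mapsto F_0(z)\cdot h$, to conclude that $F(z)=\alpha(z,g(z))$ has Zariski-dense image in $\pi^{-1}(Y)$.

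For (b), I would choose a finite cover $X=\bigcup U_i$ defined over $K$ with Zariski-local trivializations $\phi_i:E|_{U_i}\cong U_i\times G$ and enlarge the relevant finite set of bad places so that these data, together with the multiplication on $G$, become integral. For each $x\in R$ pick an $i$ with $x\in U_i$ and set $\tilde F(x):=\phi_i^{-1}(x,e_G)$; this yields a set $R'_0$ of integral points of $E$ with $\pi(R'_0)=R$. To obtain the correct Zariski closure, pass to a finite extension $K'/K$ over which $G$ admits a Zariski-dense set $\Gamma$ of $S$-integral $K'$-points — such $\Gamma$ exists because a connected algebraic group acquires a Zariski-dense, finitely generated group of integral rational points after a suitable extension (via the structure theorem, separately for tori, semi-abelian varieties, and semisimple groups). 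Then $R':=\{\tilde F(x)\cdot\gamma : x\in R,\ \gamma\in\Gamma\}$ consists of integral points, satisfies $\pi(R')=R$, and has Zariski closure $\pi^{-1}(\overline{R}^{Zar})$, since the $\Gamma$-translates fill each fiber.

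The subtlest step, I expect, is the genericity argument in (a)(2). The obstacle is that the action map $\alpha$ is merely holomorphic, not algebraic, so Zariski density of the graph of $g$ in $\C\times G$ does not automatically imply Zariski density of its $\alpha$-image in $\pi^{-1}(Y)$; one must argue that any proper algebraic subvariety $W\subsetneq\pi^{-1}(Y)$ pulls back under $\alpha$ to a proper analytic subset of $\C\times G$ that misses the graph of a suitably chosen $g$, which calls for detailed transcendence information about entire curves in algebraic groups and likely a Baire-type argument over countably many irreducible $W$. A parallel, arithmetic version of the same difficulty arises in (b) in ensuring that $\Gamma$ is simultaneously Zariski dense in $G$ and compatible with the chosen trivializations' integral structure across all the patches $U_i$.
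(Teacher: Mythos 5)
Your part (a)(1) is exactly the paper's first step (pull back, Grauert's Oka principle, contractibility of $\C$), but in (a)(2) the step you yourself flag as ``the subtlest'' is in fact the whole content of the theorem, and your proposal does not supply it. Zariski density of the graph of $g$ in $\C\times G$ gives nothing through the merely holomorphic map $\alpha(z,h)=F_0(z)\cdot h$, and the repair you sketch --- a Baire-type argument over ``countably many irreducible $W$'' --- is not available: the proper subvarieties of $\pi^{-1}(\overline{f(\C)}^{Zar})$ form an uncountable family (they need not be defined over any countable field), so ruling them out one at a time for a single chosen $g$ does not close the argument. The paper avoids transcendence issues entirely by a different mechanism: it uses a Zariski-open $U\subset X$ with an \'etale cover $V\to U$ over which $E$ has an \emph{algebraic} section $\eta$, produces discrete subsets $D_n\subset V'=V\times_X\C$ whose images in $V$ are Zariski dense while their images in $\C$ are pairwise disjoint and discrete, and then corrects the Grauert section by holomorphic maps $\C\to G$ obtained by interpolating prescribed values through a surjection $\C^N\to G$ (Proposition~\ref{cn-Lie-surj}), so that on $D_n$ the new lift takes the values $\eta(\cdot)\cdot\alpha(n)$ for a countable dense set $\{\alpha(n)\}\subset G$. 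The Zariski closure of the image then contains $\eta(V)\cdot\alpha(n)$ for every $n$, hence all of the preimage; only Zariski-closedness is used, no genericity over an uncountable family. You would need either to adopt such an interpolation argument or to actually prove your transfer step; as written there is a genuine gap.

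In part (b) the decisive step is also missing. Your lift $\tilde F(x)=\phi_i^{-1}(x,e_G)$ is \emph{not} integral after enlarging a fixed finite set of places: integrality of $x$ relative to the boundary of $X$ does not control how $v$-adically close $x$ comes to $X\setminus U_i$, and the chart section degenerates exactly there. Concretely, for $E=\A^2\setminus\{0\}\to\P^1$ and $R=\P^1(\Q)$ (all of it integral, $\P^1$ being projective), the chart section gives points $(1,b/a)$, which are not $S$-integral in $\A^2\setminus\{0\}$ for any fixed finite $S$; the correct lifts $(a,b)$ with $a,b$ coprime differ from the chart section by a fiber element $g_x\in G(K)$ depending on $x$. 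Producing such a $g_x$ uniformly --- small simultaneously at all places, with the chart chosen place by place rather than once per point --- is precisely what the paper's machinery is for: the Weil-function estimate for Zariski-locally trivial bundles (Proposition~\ref{bundle-weil-fct}, where the trivializing index depends on $v$), the adelic function $\eta=\sum_v\lambda_{G,v}$, and Borel's finiteness theorem via Proposition~\ref{borel-finite} giving a compact $C\subset G(\Af)$ with $C\cdot G(K)=G(\Af)$. Your second ingredient --- a finite extension over which $G$ has a Zariski-dense integral set, then translating to fill the fibers --- matches the paper's Proposition~\ref{G-dense-integral} and is fine in outline, but it rests on the unproved integrality of $R'_0$, so the arithmetic half also has a genuine gap.
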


{\sl Remarks.} 
\begin{enumerate}
\item
The arithmetic statement should be true even without
the additional assumption of $E\to X$ being Zariski locally trivial.
\item No extension $K'/K$ of the number field $K$ is needed if we keep the
assumption of $E\to X$ being Zariski locally trivial and assume
in addition that $G$ admits a Zariski dense integral subset.
\item
We can not simulataneously drop the condition of being locally trivial
in the Zariski topology {\em and} do without extending the field.
In fact, for an arbitrary $G$-principal bundle $E\to X$ it may
happen that $E(K)$ is empty even if $X(K)$ contains an integral
subset dense in the Zariski topology: The finite group $\Z/2\Z$
is a real algebraic group as $\{z\in\R^*:z^2=1\}=\Spec\R[t]/(t^2-1)$ 
acting on its
principal homogeneous space $E=\{z\in\C^*:z^2=-1\}$.
Then $E(\R)$ is empty and $E\to\Spec\R$ is a $G$-principal bundle.
\end{enumerate}

\subsection{Groups}
For a complex Lie group $G$ it is easy to construct holomorphic maps
$f:\C\to G$ due to the existence of the exponential map.

\begin{theorem}
Let $G$ be a connected complex Lie group.
Then there exists a holomorphic map $f:\C\to G$ with dense image.
\end{theorem}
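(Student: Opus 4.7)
The plan is to build $f$ by interpolation, exploiting the fact that any connected complex Lie group is generated as a group by the image of its exponential map. Since $\exp:\Lie(G)\to G$ is a local biholomorphism at the origin, its image is a neighbourhood of the identity, and connectedness of $G$ forces the subgroup it generates to be all of $G$. Combined with second countability of $G$, this lets me fix a countable dense sequence $\{g_n\}_{n\ge 1}\subset G$ and, for each $n$, a presentation
\[
g_n = \exp(v_{n,1})\exp(v_{n,2})\cdots\exp(v_{n,k_n})
\]
with $v_{n,j}\in\Lie(G)$ and $k_n\in\N$; note that the number of factors $k_n$ is allowed to depend on $n$, which is essential because there is no a priori bound on the ``exponential length'' of elements of a general connected complex Lie group.

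I next fix a discrete sequence $z_n\to\infty$ in $\C$ (say $z_n=n$) and use Weierstrass-type interpolation to produce entire functions $\phi_{n,j}:\C\to\C$ satisfying $\phi_{n,j}(z_n)=1$ and $\phi_{n,j}(z_m)=0$ for $m\neq n$, chosen moreover so that $\sup_{z\in K}|\phi_{n,j}(z)|$ decays arbitrarily fast in $n$ for each fixed compact $K\subset\C$. I then define
\[
f(z) := \prod_{n=1}^{\infty}F_n(z), \qquad F_n(z):=\exp\!\bigl(\phi_{n,1}(z)v_{n,1}\bigr)\cdots\exp\!\bigl(\phi_{n,k_n}(z)v_{n,k_n}\bigr).
\]
By construction $F_n(z_m)=e$ whenever $m\neq n$ and $F_n(z_n)=g_n$, so once convergence is established the identity $f(z_n)=g_n$ holds for every $n$ and the image of $f$ automatically contains the dense set $\{g_n\}$.

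The main technical obstacle is convergence of this infinite product, because $G$ need not admit a bi-invariant metric or even a faithful linear representation, so one cannot simply reduce to matrix-valued infinite products. The plan is to fix a norm on $\Lie(G)$ and an exponential chart $U$ around $e$, observe that in this chart one has a bound of the form $\|\log F_n(z)\|\lesssim\sum_j|\phi_{n,j}(z)|\,\|v_{n,j}\|$ once $F_n(z)$ lies in $U$, and then use the Baker--Campbell--Hausdorff formula to control the partial products $P_N(z)=F_1(z)\cdots F_N(z)$ in terms of $\sum_n\|\log F_n(z)\|$. By choosing the $\phi_{n,j}$ to decay sufficiently fast in $n$ on each compact $K\subset\C$, one forces all tails $\prod_{n\ge N}F_n(z)$ to become arbitrarily $C^0$-close to $e$ uniformly on $K$, so that $(P_N)$ is Cauchy for a left-invariant metric on $G$ uniformly on $K$; the limit is the sought-for holomorphic map $f:\C\to G$, and the interpolation property gives density of its image.
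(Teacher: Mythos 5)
Your argument is correct in outline, and it takes a genuinely different route from the paper. The paper first proves (Proposition~\ref{cn-Lie-surj}) that every connected complex Lie group $G$ admits a \emph{surjective} holomorphic map $\C^N\to G$: via the Levi--Malcev decomposition of the universal cover, algebraicity of the semisimple factor and the identity $(B^-\cdot B)^k=S$ for opposite Borel subgroups, everything is reduced to connected solvable groups, whose universal covers are biholomorphic to some $\C^N$; the theorem then follows by composing with a dense-image entire curve $\C\to\C^N$, quoted from \cite{W1}. You instead interpolate directly: you write each member $g_n$ of a countable dense set as a finite product of exponentials (legitimate, since $\exp$ covers a neighbourhood of $e$ and a connected group is generated by any neighbourhood of $e$), and you glue these presentations along a discrete sequence $z_n\to\infty$ by means of rapidly decaying entire interpolation functions, forming an infinite product. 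Your route is more self-contained --- it uses no structure theory, and applied to $G=(\C^N,+)$ it even reproves the quoted fact that $\C^N$ contains dense entire curves --- whereas the paper's route yields the stronger surjectivity statement $\C^N\to G$, which is reused elsewhere in the paper (for instance in the proof of the bundle-lifting theorem, where prescribed values in $G$ are interpolated through $\xi:\C^N\to G$). Two details you should nail down when writing this up: (i) Baker--Campbell--Hausdorff is unnecessary and slightly hazardous (its convergence is only local); with a left-invariant, hence complete, Riemannian distance $d$ on $G$ one has
\[
d\bigl(P_{N+1}(z),P_N(z)\bigr)=d\bigl(F_{N+1}(z),e\bigr)\le C\sum_j|\phi_{N+1,j}(z)|\,\|v_{N+1,j}\|
\]
as soon as the arguments are small, so it suffices to choose, \emph{after} the $k_n$ and $v_{n,j}$ are fixed, interpolation functions with $\sup_{|z|\le n}|\phi_{n,j}|\le 2^{-n}\bigl(k_n(1+\max_j\|v_{n,j}\|)\bigr)^{-1}$, e.g. $\phi_{n,j}(z)=(-1)^n\frac{\sin\pi z}{\pi(z-n)}\,e^{-c_{n,j}(z-n)^2}$ with $c_{n,j}$ large, which makes the partial products uniformly Cauchy on every compact set; (ii) one should record that a locally uniform limit of holomorphic maps into a complex manifold is holomorphic (work in a chart around each limit value). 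With these standard additions your proof is complete.
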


\begin{theorem}
Let $G$ be an algebraic group defined over a number field $K$.

Then there exists a finite field extension $K'/K$ and a integral
set $R\subset G(K')$ which is dense in the Zariski topology.
\end{theorem}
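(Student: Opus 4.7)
My plan is to reduce, via Chevalley's structure theorem, to producing dense integral sets on the abelian base and the linear fiber separately, then reassembling them using the lifting statement of Theorem 1.2(b).

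I begin by reducing to $G$ connected: the component group $G/G^0$ is finite, so after a finite extension of $K$ its elements become $K$-rational, and translating any dense integral set in $G^0$ by coset representatives (enlarging $S$ finitely if necessary) produces one in $G$. Assume henceforth that $G$ is connected. Chevalley's theorem, applicable over the perfect field $K$, yields
\[
1 \to L \to G \xrightarrow{\pi} A \to 1
\]
with $L$ a connected linear algebraic group and $A$ an abelian variety, both defined over $K$; the morphism $\pi$ is a principal $L$-bundle over $A$.

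For the abelian base, the Zariski closures of $A(K')$ as $K'/K$ ranges over finite extensions form a directed family of closed subvarieties of the Noetherian space $A$. Since their union $A(\bar K)$ is Zariski dense, the family attains its maximum $A$ at some finite extension $K_1/K$, so $A(K_1)$ itself is Zariski dense; projectivity of $A$ makes these points automatically $S$-integral for any $S$ containing the archimedean places. For the linear fiber, the existence of a Zariski dense integral subset of $L(\mathcal{O}_{K_2,S})$ for a suitable further extension $K_2/K_1$ and enlarged $S$ is classical, combining the Borel--Harish-Chandra theorem with Borel's density theorem in the reductive case and the algebraic exponential in the unipotent case, the general situation following by the Levi decomposition.

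To combine the two ingredients I would apply Theorem 1.2(b) to $\pi$ so as to lift the integral set of $A$ to one in $G$. The main obstacle is the Zariski local triviality hypothesis on $\pi$: this holds automatically whenever $L$ is a \emph{special} group in the sense of Serre--Grothendieck, in particular whenever $L$ is solvable, and thus for all semi-abelian varieties and more generally for any $G$ whose maximal linear subgroup is solvable. When $L$ has a non-special factor (for instance a semisimple part), one needs a supplementary argument; a natural route is to show that after a suitable isogeny of $A$ and a further finite extension of $K$ the extension splits, so that $G\cong L\times A$ as a variety and the product of the two dense integral sets already constructed supplies what is required.
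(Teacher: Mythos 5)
Your reduction to the connected case, your production of a Zariski dense set of rational points on the abelian quotient after a finite extension (a clean Noetherian argument, in fact simpler than Proposition~\ref{prop7}), and your construction of dense integral sets on connected linear groups are all essentially fine (the paper gets the latter more economically from a dominant morphism $G_a^n\times G_m^l\to L$ defined over a finite extension, using that images of integral sets are integral). The gap is in the one step that carries the real content: lifting through the Chevalley bundle $\pi:G\to A$ when the kernel $L$ is not special. Your proposed repair --- that after a suitable isogeny of $A$ and a finite field extension the extension $1\to L\to G\to A\to 1$ splits, so that $G\cong L\times A$ --- is false. Already for $L=G_m$ the extension class lies in $\mathrm{Ext}^1(A,G_m)\cong\hat A(\bar K)$, and pulling back along an isogeny $B\to A$ amounts to applying the dual isogeny, which has finite kernel; hence any non-torsion class survives every isogeny (and similarly vector extensions, classified by $H^1(A,\mathcal{O}_A)$, never split). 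You would not invoke the fallback for $L=G_m$ alone, but your case division does not decouple the special and non-special parts of $L$: for a kernel such as $L=T\times SO_n$ whose toral pushout is a non-split semi-abelian variety with non-torsion class, $L$ has a non-special factor, yet no isogeny can split the extension, since splitting $G$ would split its pushout $G/SO_n$. So exactly the case in which Zariski local triviality of $\pi$ may genuinely fail is left without proof, and that case is the heart of the theorem.

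The paper's proof (Proposition~\ref{G-dense-integral}) avoids ever lifting through a non-special structure group. It takes $Z$ to be the center of $G$ and $H$ the maximal connected linear subgroup of $Z$: then $H$ is connected, commutative and linear, hence special, so $G\to G/H$ is Zariski locally trivial and the lifting statement (Corollary~\ref{cor4}) applies to it; the map $G/H\to G/Z$ is proper, its fibers being the abelian variety $Z/H$, so a dense integral subset of $G/Z$ --- which is linear because $Z$ is the kernel of the adjoint representation, and is dominated by some $G_a^n\times G_m^l$ over a finite extension --- pulls back to a dense integral subset of $G/H$; finally, multiplying the lifted set by a dense integral subset of $H$ restores Zariski density in $G$. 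The structural fact underlying this is that the anti-affine part of $G$ is central (Rosenlicht: $G=L\cdot Z(G)^0$), so the semisimple, possibly non-special part of $L$ never has to be moved through a bundle at all. If you wish to keep your architecture, replace the attempted splitting by this decomposition: treat $L$ as you do, treat the connected commutative group $Z(G)^0$ by your own bundle-lifting argument (its linear part is commutative, hence special), and take the image of the product of the two dense integral sets under the multiplication map $L\times Z(G)^0\to G$, which is surjective.
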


({\em Remark.} This is of course well-known, 
but we may also regard this as a special case theorem~1, if we consider $G$
as
trivial $G$-principal bundle over a point.)

\subsection{Ramified coverings over semi-abelian varieties}
A semi-abe\-lian variety $A$ is a group variety which admits
a short exact sequence of algebraic groups
\[
1 \to T \to A \to M \to 1 
\]
where $T$ is a torus and $M$ is an abelian variety.

We regard varieties admitting a finite morphism onto a
semi-abelian variety.

\begin{theorem}\label{thm-ram}
Let $X$ be a variety defined over a number field $K$ which admits
a finite morphism $\pi$ onto a semi-abelian variety $A$.

Assume that there exists a 
holomorphic map $f:\C\to X$ which is non-constant resp.~with Zariski
dense image.

Then there exists a finite field extension $K'/K$ and an integral
set $R\subset X(K')$ such that $R$ is infinite
resp.~Zariski dense.
\end{theorem}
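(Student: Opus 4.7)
The plan is to use the entire curve $f$ to identify a translate of a semi-abelian subvariety of $A$ by Bloch--Ochiai, to invoke Theorem~1.3 to produce integral points on that subvariety, and finally to lift them to $X$ via a Chevalley--Weil argument.

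Since $\C$ is connected, $f$ factors through a single irreducible component of $X$, so I may assume $X$ is irreducible. The composition $g=\pi\circ f:\C\to A$ will be non-constant, because the fibers of the finite map $\pi$ are zero-dimensional. By the Bloch--Ochiai theorem for semi-abelian varieties, the Zariski closure $\overline{g(\C)}^{Zar}$ equals a translate $a+B$ of a semi-abelian subvariety $B\subseteq A$; in the Zariski-dense case one has $a+B=A$. I set $Y=\overline{f(\C)}^{Zar}\subseteq X$, so that $\pi|_Y:Y\to a+B$ is finite and surjective.

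Applying Theorem~1.3 to $B$ will produce, after a finite extension $K_1/K$, a Zariski-dense integral set $R_B\subseteq B(K_1)$; translating by $a$ gives a Zariski-dense integral set $R_0\subseteq(a+B)(K_1)$. The branch divisor $D\subseteq a+B$ of $\pi|_Y$ is a proper closed subvariety, so $R_0\setminus D$ remains Zariski-dense, and classical Chevalley--Weil applied to the étale restriction $Y\setminus\pi^{-1}(D)\to(a+B)\setminus D$ provides a finite extension $K'/K_1$ together with an integral set $R'\subseteq Y(K')\subseteq X(K')$ satisfying $\pi(R')=R_0\setminus D$. Surjectivity of $\pi|_Y$ then forces $R'$ to be Zariski-dense in $Y$, which gives Zariski-density in $X$ when $Y=X$ and infiniteness whenever $f$ is non-constant.

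The main obstacle arises in the non-constant case (in the Zariski-dense case $a+B=A$ and translation is trivial): the translate $a+B$ need not be defined over $\bar K$, since the coset $\bar a\in(A/B)(\C)$ determined by $a$ may be transcendental, and then the translation step has no arithmetic meaning. The cleanest way to sidestep this is to bypass $Y$ altogether: apply Theorem~1.3 directly to $A$ (defined over $K$) to obtain a Zariski-dense integral set in $A(K_1)$, remove the branch divisor of $\pi:X\to A$, and lift along the étale part of $\pi$ by classical Chevalley--Weil to a Zariski-dense integral set in $X(K')$. Irreducibility of $X$ together with positive-dimensionality (for the non-constant case) then delivers the required density or infiniteness, with the entire curve hypothesis contributing only these structural features of $X$.
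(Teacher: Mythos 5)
There is a genuine gap, and it sits exactly at the Chevalley--Weil step in both versions of your argument. Chevalley--Weil for the \'etale cover $X\setminus\pi^{-1}(D)\to A\setminus D$ (or $Y\setminus\pi^{-1}(D)\to (a+B)\setminus D$) requires the points downstairs to be integral \emph{on the open variety} $A\setminus D$, i.e.\ $D$-integral: their reductions must avoid $D$ at almost all places. The set you produce via the group theorem is integral with respect to $A$ (resp.\ $B$) only; discarding the finitely many points lying on $D$ does not make the rest $D$-integral, and points that meet $D$ modulo arbitrarily many primes have preimages whose fields of definition have unbounded ramification, so no single finite extension $K'$ works. Worse, this route cannot be repaired: by the theorems of Faltings and Vojta, a Zariski-dense set of $D$-integral points on a semi-abelian variety does not exist for a nonzero effective divisor $D$, and the ramified case is precisely the content of the theorem. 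Your ``bypass'' version makes the problem visible: it uses the entire curve only for irreducibility and positive-dimensionality, so it would apply verbatim to a genus-$2$ curve $X$ with a degree-$2$ map onto an elliptic curve $E$ of positive rank, producing infinitely many points of $X(K')$ for a fixed $K'$ --- contradicting Faltings. So the argument proves a false statement, which pinpoints the invalid step.

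The missing idea is that the paper transfers integral points by \emph{push-forward}, not by lifting through the ramified cover. The key input (from the cited Noguchi--Winkelmann--Yamanoi paper, not Bloch--Ochiai on $\pi\circ f$) is a structure theorem for the curve in $X$ itself: there are a semi-abelian variety $B$, a morphism $\phi:B\to X$ and an entire curve $F:\C\to B$ with $f=\phi\circ F$ and $\phi(B)=\overline{f(\C)}^{Zar}$, with $\pi\circ\phi:B\to A$ finite. Knowing only that $\overline{\pi\circ f(\C)}^{Zar}$ is a translate of a semi-abelian subvariety of $A$ says nothing comparable about $Y=\overline{f(\C)}^{Zar}$, which is why your lifting step has nothing to lean on. The paper then handles the arithmetic: $A$ is defined over a finite extension via the quasi-Albanese of $X$, $B$ over a further finite extension, and the transcendental translate is dealt with by observing that the variety of morphisms $\psi:B\to X$ with $\pi\circ\psi=\tau_p\circ\zeta$ for some $p\in A$ has a $\C$-point, hence a $\bar\Q$-point, yielding $\psi$ defined over a number field with $\dim\psi(B)=\dim\phi(B)$. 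Finally a Zariski-dense integral set in $B(K_3)$ (the group proposition) is pushed forward by $\psi$ --- images of integral sets under morphisms are integral --- giving an integral set in $X(K_3)$ whose closure has the dimension of $\overline{f(\C)}^{Zar}$. Your instinct to worry about the field of definition of the translate $a+B$ was sound, but the resolution is this descent of the parametrizing morphism, not the removal of the entire-curve hypothesis.
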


\begin{corollary}
Let $X$ be a variety defined over a number field $K$ which admits
a finite morphism onto a semi-abelian variety.

Assume that every integral subset of $X(K')$
is finite for a every finite field extension $K'/K$.

Then every holomorphic map from $\C$ to $X(\C)$ is constant.
\end{corollary}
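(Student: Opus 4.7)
The corollary is simply the contrapositive of the ``non-constant/infinite'' half of Theorem~\ref{thm-ram}, so my plan is to argue directly by contradiction and then just invoke that theorem.

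Specifically, I would assume toward a contradiction that there exists a non-constant holomorphic map $f:\C\to X(\C)$. The hypothesis of the corollary supplies a finite morphism $\pi$ from $X$ onto a semi-abelian variety $A$, all defined over $K$, so the assumptions of Theorem~\ref{thm-ram} are satisfied. Applying the ``infinite'' version of that theorem to $f$, we obtain a finite field extension $K'/K$ and an integral set $R\subset X(K')$ which is infinite.

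This directly contradicts the standing assumption of the corollary that every integral subset of $X(K')$ is finite for every finite field extension $K'/K$. Hence no non-constant $f:\C\to X(\C)$ can exist, and every holomorphic map from $\C$ to $X(\C)$ must be constant.

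There is essentially no obstacle here beyond correctly matching the logical form: the main content has already been packaged into Theorem~\ref{thm-ram}, and the corollary merely records its contrapositive in the form most convenient for applications (where one verifies the arithmetic finiteness hypothesis and wants to deduce complex-analytic hyperbolicity-type information). The only mild care needed is to check that Theorem~\ref{thm-ram} is being used in its ``non-constant $\Rightarrow$ infinite'' form rather than the ``Zariski dense $\Rightarrow$ Zariski dense'' form, but both statements sit in the same theorem, so no additional work is required.
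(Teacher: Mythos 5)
Your argument is correct and is exactly how the paper intends the corollary to follow: it is the contrapositive of the ``non-constant $\Rightarrow$ infinite integral set'' half of Theorem~\ref{thm-ram}, which is why the paper states it without a separate proof. No gaps.
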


\subsection{Characterising integral sets}
For an affine variety $V$ there is an easy definition of
an ``integral'' set:
This is a set $H$ of $K$-rational points on $V$ such that there
exists a closed embedding $i$ of $V$ into an affine space $\A^n$
such that all the coordinates of all the elements $i(x)$ ($x\in H$)
are integral (i.e.~in some $\O_L(S)$ where $L$ is a finite extension
of $K$ and $S$ a finite set of valuations including all archimedean
ones).

For arbitrary varieties there are two equivalent (see \cite{V}, proposition~1.4.7)
definitions: The first definition uses models over the integers,
the second Weil functions.

Here we present a third way, which we feel is more elementary.
\begin{proposition}
Let $V$ be a quasi-projective variety defined over some number field $K$.
Then $H\subset V(K)$ is integral if and only if
there exists a finite field
extension $L/K$ and a finite set $S$ of valuations of $L$
including all archimedean ones, an affine $L$-variety $W$, a
$L$-morphism $\phi:W\to V$ and an integral subset $I\subset W(L)$
such that $H\subset\phi(I)$.
\end{proposition}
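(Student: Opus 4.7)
The ``if'' direction is straightforward by functoriality of integrality under morphisms. Given an $L$-morphism $\phi: W \to V$ and an integral subset $I \subset W(L)$, pushing an $S$-integral model of $I$ forward along $\phi$ produces an $S$-integral model of $\phi(I)$, so $\phi(I) \subset V(L)$ is integral. Since $H \subset \phi(I)$, and any subset of an integral set is integral, $H$ is integral --- and since the notion of integral already permits passage to a finite field extension, this suffices to exhibit $H$ as an integral set in $V$ in the sense of the paper.

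For the ``only if'' direction, suppose $H \subset V(K)$ is integral. The plan is to combine a variant of Jouanolou's trick with Theorem~1b in order to realize $H$ as the image of an integral set on an affine variety. First I would invoke Jouanolou's trick: since $V$ is quasi-projective over $K$, there exists an affine $K$-variety $W$ together with a surjective $K$-morphism $\pi : W \to V$ which makes $W$ into a Zariski locally trivial principal $G$-bundle over $V$ for an appropriate connected algebraic group $G$ (e.g.\ a vector group $\mathbf{G}_a^n$). I would then apply Theorem~1b to $\pi: W \to V$ with $R = H$. Choosing $G$ so as to admit a Zariski dense integral subset --- for $\mathbf{G}_a^n$ one can take $\Z^n \subset \mathbf{G}_a^n(\Q)$ --- Remark~2 after Theorem~1 removes the need for any field extension and produces an integral set $I \subset W(K)$ with $\pi(I) = H$. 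Taking $L = K$, $\phi = \pi$, and a suitable $S$ then completes the argument, since $H = \phi(I)$.

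The main obstacle lies in the first step. The classical Jouanolou construction produces an affine torsor under a (possibly non-trivial) vector bundle on $V$, which is not literally a principal bundle for a fixed algebraic group acting globally on $W$. One must verify that the appropriate variant of Jouanolou's trick genuinely yields a principal $G$-bundle structure with $G$ a fixed $K$-algebraic group admitting a Zariski dense integral subset, so that Theorem~1b applies without further modification. Once this geometric setup is in place the rest is a direct invocation of the lifting result, and the two directions together give the proposition.
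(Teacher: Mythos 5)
Your ``if'' direction is fine and matches the paper (it is just the functoriality of integral sets under morphisms plus the fact that subsets of integral sets are integral). In the ``only if'' direction, however, the step you yourself flag is a genuine gap, not a routine verification: the classical Jouanolou construction produces a torsor under a vector bundle on $V$, i.e.\ an affine-space bundle, and this is a principal bundle under a fixed group such as $G_a^n$ only when that vector bundle is trivial. In general there is no algebraic group $G$ acting on the Jouanolou variety $W$ making $W\to V$ a $G$-principal bundle, so Theorem~1b --- which is stated for $G$-principal bundles, not for arbitrary Zariski locally trivial fibrations --- cannot be invoked as you propose. Since closing exactly this gap is the content of the reduction, the proposal as it stands does not prove the proposition.

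The paper circumvents the problem by replacing the single Jouanolou bundle with a composition of two honest principal bundles with fixed structure groups. First, embedding $V\subset\P_N$ and restricting $\A^{N+1}\setminus\{0\}\to\P_N$ gives a $G_m$-principal bundle $Q\to V$ with $Q$ quasi-affine. Second, writing $Q=\bar Q\setminus Z$ with $\bar Q$ affine and $Z$ the common zero locus of regular functions $f_1,\ldots,f_s$, the map $z\mapsto(z,f_1(z),\ldots,f_s(z))$ embeds $Q$ as a closed subvariety of $\bar Q\times(\A^s\setminus\{0\})$, and pulling back the bundle $\bar Q\times SL_s\to\bar Q\times(\A^s\setminus\{0\})$ over this embedding yields an affine variety $W$ which is a principal bundle over $Q$ with a fixed linear algebraic structure group (the $SL_s$-stabilizer of a nonzero vector). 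Both bundles are Zariski locally trivial, so Theorem~1b applies to each in turn; composing the two lifts gives the affine $W$, the morphism $\phi:W\to V$, and the integral set $I$ with $H\subset\phi(I)$. Note also that the proposition already allows a finite extension $L/K$, so your detour through Remark~2 to avoid extending the field is unnecessary.
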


\begin{proof}
One direction is well-known (see \cite{V}).
The other direction follows from our Main Theorem in combination with the
two lemmata below which are essentially a variant of a result
known as ``Jouanolou's trick'', see \cite{J}.
\end{proof}

\begin{lemma}
Let $V$ be a quasiprojective variety. Then there is a
$G_m$-principal bundle $Q\to V$ such that $Q$ is a quasi-affine
variety.
\end{lemma}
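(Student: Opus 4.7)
The plan is to adapt the classical construction of a principal $G_m$-bundle over projective space: the complement of the origin in affine space, projected to projective space. Since this total space is open in affine space and hence quasi-affine, pulling back along any locally closed embedding will preserve this property.

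First, I would use that $V$ is quasi-projective to fix a locally closed embedding $V\hookrightarrow\P_n$ for some $n$. Then I would consider the standard projection
\[
p\colon \A^{n+1}\setminus\{0\}\to \P_n,\qquad (x_0,\ldots,x_n)\mapsto [x_0:\cdots:x_n],
\]
together with the scalar action of $G_m$ on $\A^{n+1}$. On each standard chart $U_i=\{x_i\neq 0\}\subset\P_n$, the preimage is trivialized by the section $[x_0:\cdots:x_n]\mapsto(x_0/x_i,\ldots,x_n/x_i)$, so $p$ is a Zariski locally trivial $G_m$-principal bundle.

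Next I would set $Q:=p^{-1}(V)$, equipped with the induced $G_m$-action and the restricted map $Q\to V$. This is automatically a $G_m$-principal bundle, since $G_m$-principal bundles pull back along any morphism.

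Finally I would verify that $Q$ is quasi-affine. Because $V$ is locally closed in $\P_n$ and $p$ is a morphism, $Q$ is locally closed in $\A^{n+1}\setminus\{0\}$; the latter is in turn open (hence locally closed) in the affine variety $\A^{n+1}$. Composing, $Q$ is locally closed in $\A^{n+1}$, which is precisely the definition of quasi-affine. There is really no main obstacle here beyond choosing the right total space; the mild subtlety is only to note that ``quasi-affine'' is stable under passage to locally closed subvarieties of an affine variety, and that $\A^{n+1}\setminus\{0\}$ is quasi-affine rather than affine (the distinction matters in low dimension but not for the conclusion we need).
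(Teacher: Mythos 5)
Your construction is exactly the one in the paper: embed $V$ in projective space, pull back the tautological $G_m$-bundle $\A^{N+1}\setminus\{0\}\to\P_N$, and observe that the preimage of $V$ is locally closed in $\A^{N+1}$, hence quasi-affine. The proposal is correct and matches the paper's proof, with the added (welcome) explicit verification of local triviality and quasi-affineness.
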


\begin{proof}
We embed $V\subset \P_N$ and take $Q=\pi^{-1}(V)$ where
$\pi:\A^{N+1}\setminus\{(0,\ldots,0)\}\to\P_N$.
\end{proof}

\begin{lemma}
Let $Q$ be quasi-affine variety.
Then there is a principal bundle $W\to Q$ with a linear algebraic
group as structure group such that $W$ is an affine variety.
\end{lemma}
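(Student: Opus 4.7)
The plan is to carry out a version of Jouanolou's trick, using the quasi-affine hypothesis to obtain a fixed finite set of global regular functions cutting out the complement of $Q$ in some affine closure.

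Since $Q$ is quasi-affine I may choose an open embedding $Q\hookrightarrow \bar Q$ into an affine variety $\bar Q$; the closed complement $Z=\bar Q\setminus Q$ has a finitely generated ideal $I\subset \O(\bar Q)$, for which I fix generators $f_1,\ldots,f_r$. Then $Q$ is precisely the open locus where the tuple $(f_1(x),\ldots,f_r(x))$ is non-zero.

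I would then set
\[
W\;=\;\{(x,A)\in \bar Q\times GL_r \,:\, A\cdot(f_1(x),\ldots,f_r(x))^{T}=e_1\},
\]
where $e_1=(1,0,\ldots,0)^{T}$. The defining equation forces some $f_i(x)\neq 0$, so the first projection $\pi\colon W\to \bar Q$ sends $W$ into $Q$. Being cut out by polynomial equations inside the affine variety $\bar Q\times GL_r$, the variety $W$ is itself affine, which is the key property we need.

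Finally, I would identify $\pi\colon W\to Q$ as a principal bundle. Writing $v(x)=(f_1(x),\ldots,f_r(x))^{T}$, the fiber $\pi^{-1}(x)$ is the set of $A\in GL_r$ sending the non-zero vector $v(x)$ to $e_1$, and is a torsor under left multiplication by the stabilizer $G=\{B\in GL_r:Be_1=e_1\}$. This $G$ is the closed subgroup of $GL_r$ consisting of matrices with first column $e_1$, hence is a linear algebraic group. On each Zariski open set $\{f_i\neq 0\}\subset Q$ an explicit algebraic section $x\mapsto A(x)$ is produced by elementary linear algebra, giving Zariski-local triviality and hence the required $G$-principal bundle structure. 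No step presents a serious obstacle; the only point to be careful about is that $f_1,\ldots,f_r$ must be chosen so that their common zero locus equals $Z$ on the nose, which is ensured by taking them to be ideal generators.
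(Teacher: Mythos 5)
Your proof is correct and follows essentially the same route as the paper: both pull back the standard bundle over $\A^r\setminus\{(0,\ldots,0)\}$ along the map $x\mapsto(f_1(x),\ldots,f_r(x))$ cutting out the boundary, and realize $W$ as a closed subvariety of an affine product $\bar Q\times(\text{matrix group})$. The only differences are cosmetic: you use $GL_r$ and the stabilizer of $e_1$ (which incidentally avoids the small-$s$ transitivity issue with $SL_s$) and write $W$ directly by equations, whereas the paper phrases it as the restriction of the bundle $\bar Q\times SL_s\to\bar Q\times(\A^s\setminus\{(0,\ldots,0)\})$ to the graph embedding of $Q$.
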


\begin{proof}
We realize $Q$ as $Q=\bar Q\setminus Z$ where $\bar Q$ is an affine
variety and $Z$ is a closed subvariety.
Then there are regular functions $f_1,\ldots,f_s$ on $\bar Q$ such that
$Z$ is the joint zero locus of the $f_i$. Then $z\mapsto (z,f_1(z),
\ldots, f_s(z))$ yields a closed embedding $i$ of $Q$ into 
\[
\bar Q\times \left( \A^s\setminus\{(0,\ldots,0)\} \right).
\]
The usual action of $SL_s$ on $\A^s$ gives us a principal
bundle $SL_s\to  \left( \A^s\setminus\{(0,\ldots,0)\} \right)$
and thereby a principal bundle $\rho$ from $\bar Q\times SL_s$
to
\[
\bar Q\times \left( \A^s\setminus\{(0,\ldots,0)\} \right).
\]
Now we take the restriction of $\rho$ to $W=\rho^{-1}(i(Q))$.
Note that $W$ is an affine variety, because it is a closed
subvariety of $\bar Q\times SL_s$ and both $\bar Q$ and $SL_s$ are
affine varieties.
\end{proof}

\section{Complex Lie groups}

In this section we will prove the analytic part of theorem~1.

Our first step is the auxiliary result below.

\begin{proposition}\label{cn-Lie-surj}
Let $G$ be a connected complex Lie group.
Then there exists a complex vector space $\C^N$
with a surjective holomorphic map $f:\C^N\to G$.
\end{proposition}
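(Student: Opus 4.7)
The approach is to reduce to the simply connected case, apply the Levi--Malcev decomposition to split $G$ as a semidirect product of a solvable and a semisimple piece, and handle the two pieces separately. Since the universal covering $p:\tilde G \to G$ is a holomorphic surjection of complex Lie groups, it suffices to construct a surjective holomorphic map $\C^N \to \tilde G$; hence I assume $G$ is simply connected. By Levi--Malcev, $G = R \rtimes S$, where $R$ is the simply connected solvable radical and $S$ is a simply connected semisimple Levi subgroup. As a complex manifold $G \cong R \times S$, so it suffices to produce surjective holomorphic maps onto $R$ and $S$ separately and take their product.

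For the radical $R$: since $R$ is simply connected and solvable, the exponential map $\exp_R:\mathfrak{r}\to R$ is a biholomorphism, giving a surjection from $\C^{\dim_{\C}R}$. This is the easy case.

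For the semisimple factor $S$: a simply connected complex semisimple Lie group is linear algebraic, so every $g\in S$ admits a Jordan decomposition $g = g_s g_u$ into commuting semisimple and unipotent parts. The element $g_s$ lies in some maximal torus $T\subset S$ with $T\cong(\C^*)^r$; since $\exp:\mathfrak{t}\to T$ is surjective, $g_s = \exp(X_s)$ for some $X_s\in\mathfrak{s}$. Likewise $g_u = \exp(X_u)$, where $X_u=\log g_u\in\mathfrak{s}$ is nilpotent (the log series terminates because $g_u-1$ is nilpotent in any faithful representation). Hence $(X,Y)\mapsto \exp(X)\exp(Y)$ is a surjective holomorphic map $\mathfrak{s}\oplus\mathfrak{s}\cong\C^{2\dim_{\C}S}\to S$. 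Combining with the solvable case yields the required surjection $\C^{\dim_{\C}R + 2\dim_{\C}S}\to G$. The main step --- and the one where the argument would collapse if $G$ were merely a real Lie group --- is this double-exponential surjectivity for $S$, where complex algebraicity and the surjectivity of the exponential onto a complex torus are both essential.
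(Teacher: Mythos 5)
Your reduction to the simply connected case, the splitting $\tilde G\cong R\times S$ via Levi--Malcev, and your treatment of the semisimple factor are all correct; in fact the Jordan-decomposition argument showing that $(X,Y)\mapsto\exp(X)\exp(Y)$ maps $\mathfrak{s}\oplus\mathfrak{s}$ onto $S$ is a legitimate alternative to the paper's device of writing $S=(B^-\cdot B)^k$ for Borel subgroups and reducing everything to a solvable group. The genuine gap is in the radical: the claim that for a simply connected solvable complex Lie group $R$ the exponential map $\exp_R:\mathfrak{r}\to R$ is a biholomorphism is false. It holds for nilpotent groups, but not for solvable ones, even complex and simply connected. Take $R=\C\ltimes\C$ with multiplication $(t,z)(t',z')=(t+t',\,z+e^tz')$ (the universal cover of the complex affine group). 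Its Lie algebra has a basis $T,Z$ with $[T,Z]=Z$, and one computes $\exp(aT+bZ)=\bigl(a,\;b\,\frac{e^a-1}{a}\bigr)$ (value $b$ at $a=0$); for $a=2\pi i$ the second coordinate is forced to vanish, so no point $(2\pi i,z)$ with $z\neq0$ lies in the image. Thus $\exp_R$ is not even surjective, and your argument for the factor $R$ collapses as written. Ironically, this is the step your closing sentence calls ``the easy case'': the delicate part of the proposition is precisely the solvable factor, not the semisimple one.

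The statement you actually need is weaker and true: a simply connected solvable complex Lie group is biholomorphic to some $\C^N$ (equivalently, admits a surjective holomorphic map from some $\C^N$). This is exactly the fact the paper invokes (``for every connected solvable complex Lie group $A$ the universal covering $\tilde A$ is biholomorphic to some $\C^N$''), but it is a theorem in its own right, proved not by the exponential map but, for instance, by induction on dimension: inside $\mathfrak r$ choose a hyperplane ideal containing $[\mathfrak r,\mathfrak r]$; the corresponding connected normal subgroup $N$ is closed and simply connected, $R/N\cong\C$, and the extension splits holomorphically, so $R\cong N\times\C$ as a complex manifold. Alternatively, for your purposes mere surjectivity suffices, and the same induction shows $R=\exp(\C X_1)\cdots\exp(\C X_n)$ for a suitable basis $X_1,\dots,X_n$ of $\mathfrak r$, since each one-dimensional quotient $\C$ or $\C^*$ is hit by a single one-parameter group. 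With either repair in place of the false sentence about $\exp_R$, the rest of your proof goes through and yields a surjection $\C^{\dim_{\C}R+2\dim_{\C}S}\to G$.
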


\begin{proof}
Let $\pi:\tilde G\to G$ be the universal covering and
$\tilde G=S\cdot R$ its Levi-Malcev decomposition, i.e., $S$ is a maximal
connected semisimple Lie subgroup and $R$ is a maximal connected normal
solvable subgroup. As a semisimple complex Lie group, $S$ is algebraic. 
Fixing a root system, we obtain a Borel subgroup $B$ corresponding
to the positive roots and an ``opposite Borel subgroup'' $B^-$
corresponding to the negative roots.
Now $B^-\cdot B$ contains a Zariski open neighbourhood $V$ of
$e$ in $S$. Since connected topological groups are generated as a
group by
each neighbourhood of the neutral element, we have
$\cup_k V^k=S$. Since $S\setminus V^k$ is a descending sequence
of closed subvarieties, it follows that $(B^-\cdot B)^k=S$
for some $k\in\N$.
Thus there is a surjective holomorphic map from the connected
solvable complex Lie group $A=(B^-\times B)^k\times R$ to $G$.
This implies the statement, because for every connected
solvable complex Lie group $A$ the universal covering space $\tilde A$
is biholomorphic to some $\C^N$.
\end{proof}

\begin{corollary}
Let $G$ be a connected complex Lie group.

Then there exists a holomorphic map $f:\C\to G$
with dense image.
\end{corollary}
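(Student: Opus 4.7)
The plan is to reduce the corollary to Proposition~\ref{cn-Lie-surj} by constructing an entire curve in $\C^N$ whose image is dense, and then composing with the surjection $h:\C^N\to G$ produced by that proposition.

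First I would invoke Proposition~\ref{cn-Lie-surj} to obtain $N\in\N$ and a surjective holomorphic map $h:\C^N\to G$. Next I would fix a countable dense sequence $(p_n)_{n\in\N}$ in $\C^N$, say $p_n=(p_n^{(1)},\dots,p_n^{(N)})$. The classical Weierstrass interpolation theorem, applied to the discrete set $\N\subset\C$ in each coordinate separately, yields entire functions $\phi_i:\C\to\C$ with $\phi_i(n)=p_n^{(i)}$ for all $n$. The assembled map $\phi=(\phi_1,\dots,\phi_N):\C\to\C^N$ is holomorphic and its image contains the whole sequence $\{p_n\}$, hence is dense in $\C^N$.

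Finally, set $f:=h\circ\phi:\C\to G$; this is a holomorphic map. To see that $f(\C)$ is dense in $G$, I would use continuity and surjectivity of $h$ to estimate
\[
\overline{f(\C)}=\overline{h(\phi(\C))}\supset h(\overline{\phi(\C)})=h(\C^N)=G,
\]
so $\overline{f(\C)}=G$ as required.

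All the structural work — Levi--Malcev decomposition, Bruhat-type covering of a semisimple factor by $B^-\cdot B$, and the biholomorphism between the universal cover of a connected solvable complex Lie group and some $\C^N$ — has already been carried out inside Proposition~\ref{cn-Lie-surj}, so no genuine obstacle remains here. The only additional ingredient is the existence of a holomorphic curve dense in $\C^N$, which is an elementary consequence of Weierstrass interpolation and, in my view, the step most worth writing down explicitly.
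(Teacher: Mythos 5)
Your proposal is correct and takes essentially the same route as the paper: compose the surjection $\C^N\to G$ from Proposition~\ref{cn-Lie-surj} with an entire curve whose image is dense in $\C^N$, and use continuity ($h(\overline{\phi(\C)})\subset\overline{h(\phi(\C))}$) to conclude density in $G$. The only difference is that the paper cites \cite{W1} for the existence of a dense curve in $\C^N$, while you construct one explicitly by interpolating a countable dense sequence at the integers, which is a correct, self-contained substitute for that citation.
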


\begin{proof}
This follows from the theorem, because there exists a 
holomorphic map $f:\C\to\C^N$ with dense image for every $N\in\N$
(see e.g.~\cite{W1}).
\end{proof}

Next we need an auxiliary result.
\begin{proposition}\label{disc-dense}
Let $C$ be a connected complex manifold, 
$\rho:C\to\R^+$ an unbounded continuous
function, $X$ a complex algebraic variety and $f:C\to X$ a holomorphic
map such that the image $f(C)$ is Zariski dense.

Then there exists a discrete subset $D\subset C$ such that $f(D)$ is Zariski
dense in $X$. Moreover $D$ can be chosen such that $\{x\in D:\rho(x)<c\}$
is finite for every $c\in\R$.
\end{proposition}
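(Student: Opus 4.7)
The plan is to construct $D = \{z_k\}_{k \in \N}$ one point at a time, arranging simultaneously that $\rho(z_k) > k$ and that the Zariski closure $Y_k := \overline{\{f(z_1),\ldots,f(z_k)\}}^{Zar}$ strictly grows until it equals all of $X$. Noetherianity of the Zariski topology on the algebraic variety $X$ forces this growth to stabilize after finitely many strict enlargements, so an initial segment of $D$ already yields Zariski density of $f(D)$; the remaining $z_k$ are inserted purely to push $D$ to infinity with respect to $\rho$.

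For the inductive step, suppose $z_1,\ldots,z_k$ have been chosen, and let $Y_k$ denote the Zariski closure of their $f$-images. If $Y_k = X$, pick any $z_{k+1}$ in the open set $\{z \in C : \rho(z) > k+1\}$ distinct from the previously chosen points; this set is nonempty by unboundedness and continuity of $\rho$, and infinite because $C$ must have positive complex dimension (an unbounded continuous function on a $0$-dimensional connected complex manifold is absurd). If instead $Y_k \subsetneq X$, then Zariski density of $f(C)$ gives $f(C) \not\subset Y_k$, so $f^{-1}(Y_k)$ is a proper closed analytic subset of the connected complex manifold $C$ and hence nowhere dense. The open set $\{\rho > k+1\}$ therefore meets its complement, and any $z_{k+1}$ selected from $\{\rho > k+1\} \setminus f^{-1}(Y_k)$ strictly enlarges the Zariski closure.

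By Noetherianity the chain $Y_1 \subseteq Y_2 \subseteq \ldots$ stabilizes at $X$, so $f(D)$ is Zariski dense. The growth condition $\rho(z_k) > k$ combined with continuity of $\rho$ rules out accumulation points of $D$ in $C$: any limit point $z_\infty$ of a subsequence would satisfy $\rho(z_\infty) = \lim \rho(z_{k_j}) = +\infty$, which is impossible. Hence $D$ is discrete, and $\{z \in D : \rho(z) < c\}$ is contained in the finite set $\{z_k : k \leq c\}$. The only substantive geometric input is the fact that a proper analytic subset of a connected complex manifold is nowhere dense; the rest is bookkeeping that plays Noetherianity against a countable construction, and I do not anticipate a genuine obstacle.
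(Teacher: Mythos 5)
Your bookkeeping is fine (discreteness from $\rho(z_k)>k$; nonemptiness of $\{\rho>k+1\}\setminus f^{-1}(Y_k)$ because a proper analytic subset of a connected manifold is nowhere dense), but the decisive step is wrong: ``by Noetherianity the chain $Y_1\subseteq Y_2\subseteq\ldots$ stabilizes at $X$.'' Noetherianity of the Zariski topology is the \emph{descending} chain condition on closed sets; an \emph{ascending} chain of closed subvarieties need not stabilize, and adding one point at a time can strictly enlarge the closure forever without ever reaching $X$. Concretely, take $C=\C^2$, $X=\A^2$, $f$ the identity, $\rho(z)=1+|z|$. Your inductive rule permits the choices $z_k=(k+1,0)$: at every stage $Y_k$ is a finite set, the new point avoids $f^{-1}(Y_k)$ and satisfies $\rho(z_{k+1})>k+1$, yet $\overline{f(D)}^{Zar}$ is the line $\{y=0\}\neq X$. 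So the construction as stated does not yield Zariski density; it only guarantees strict growth of the finite-stage closures, which is not enough. Nor can it be patched by enumerating the obstructions: when $\dim X\ge 1$ there is no countable family of proper closed subvarieties that is cofinal among all proper closed subvarieties (e.g.\ in $\A^2$ no countable family of proper closed subsets contains every irreducible curve), so a greedy choice against a countable list is unavailable.

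This gap is exactly what the paper's proof is organized around. It restricts attention to discrete sets $D$ (satisfying the $\rho$-finiteness condition) whose image has \emph{irreducible} Zariski closure --- for irreducible closed subsets a strictly ascending chain has length at most $\dim X+1$, since a proper closed subset of an irreducible variety has strictly smaller dimension, so maximal elements exist; this is the legitimate use of noetherianity. It then proves, by a merging argument (replace $D_n$ by $\{x\in D_n:\rho(x)>n\}$ and take the union, which is still discrete and satisfies the $\rho$-condition), that only finitely many subvarieties occur as such maximal closures; their union $Z_0$ is therefore a subvariety, and if $Z_0\neq X$ one could choose an admissible infinite set avoiding $Z_0$, whose components would have to be dominated by maximal elements inside $Z_0$ --- a contradiction. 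Finally, irreducibility of $X=\overline{f(C)}^{Zar}$ gives a single maximal $D$ with $\overline{f(D)}^{Zar}=X$. If you want to keep an inductive flavor, you must at least work with irreducible closures and control dimensions; the argument as you wrote it does not prove the proposition.
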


\begin{corollary}
Let $C$ be a complex manifold and $F:C\to X$ be a holomorphic map
to a complex algebraic variety. Then there exists a discrete subset
$D\subset C$ such that the Zariski closures of $f(C)$ and $f(D)$
coincide.
\end{corollary}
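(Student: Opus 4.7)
My plan is to reduce the corollary to Proposition~\ref{disc-dense} together with an elementary Noetherian ascending-chain argument. First, replacing $X$ by $\overline{f(C)}^{Zar}$, I may assume that $f(C)$ is Zariski dense in $X$; then the assertion becomes exactly that some discrete $D\subset C$ has $f(D)$ Zariski dense in $X$.

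Next, since any complex manifold is paracompact, I decompose $C=\bigsqcup_i C_i$ into its (at most countably many) connected components. For each non-compact component $C_i$, a continuous unbounded function $\rho_i:C_i\to\R^+$ exists---one may take the Riemannian distance from a base point with respect to a complete Hermitian metric on $C_i$. Applying Proposition~\ref{disc-dense} to $f|_{C_i}:C_i\to \overline{f(C_i)}^{Zar}$ (whose image is Zariski dense in the target by construction) produces a discrete $D_i\subset C_i$ with $\overline{f(D_i)}^{Zar}=\overline{f(C_i)}^{Zar}$.

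For compact components $C_i$, the proposition does not literally apply, because no continuous unbounded $\rho_i$ exists. I handle this case by the Noetherian property of algebraic varieties, which is essentially the core of the proposition: pick $x_1\in C_i$ arbitrarily and, given $x_1,\dots,x_k$, choose $x_{k+1}\in C_i$ with $f(x_{k+1})\notin V_k:=\overline{\{f(x_1),\dots,f(x_k)\}}^{Zar}$ whenever $V_k\subsetneq \overline{f(C_i)}^{Zar}$; this is possible because $V_k$ is then a proper subvariety and $f(C_i)\not\subset V_k$. The strictly ascending chain $V_1\subsetneq V_2\subsetneq\cdots$ must terminate in finitely many steps at $\overline{f(C_i)}^{Zar}$, yielding a finite (hence discrete) set $D_i$ with the desired property.

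Finally, set $D:=\bigcup_i D_i$. Because the $C_i$ are pairwise disjoint and open in $C$, $D$ is discrete in $C$, and
\[
\overline{f(D)}^{Zar}=\overline{\bigcup_i \overline{f(D_i)}^{Zar}}^{Zar}=\overline{\bigcup_i \overline{f(C_i)}^{Zar}}^{Zar}=\overline{f(C)}^{Zar},
\]
completing the proof. The only genuine obstacle is the compact-component case, where the auxiliary function $\rho_i$ of the proposition is unavailable; it is overcome by the Noetherian chain step, which is in any case the essential mechanism behind Proposition~\ref{disc-dense}.
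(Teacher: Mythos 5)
Your proof is correct, and it is more careful than what the paper does: the paper simply states the corollary after Proposition~\ref{disc-dense} (the proof printed there is really the proof of the proposition, with its maximality-and-noetherianity argument using the exhaustion $\rho$), glossing over the facts that the proposition assumes $C$ connected and that no unbounded continuous $\rho$ exists on a compact component. You patch exactly these two points: you split $C$ into components, invoke the proposition on the non-compact ones (with a complete-metric distance function as $\rho$), and handle compact components by the greedy Noetherian chain $V_1\subsetneq V_2\subsetneq\cdots$. Worth noting: your compact-case argument does not use compactness at all, so it already proves the corollary in full --- for \emph{any} component, a finite set $D_i$ with $\overline{f(D_i)}^{Zar}=\overline{f(C_i)}^{Zar}$ exists by the ascending chain condition, and hence a finite-per-component, in particular discrete, $D$ always works. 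So the detour through Proposition~\ref{disc-dense} and the complete-metric construction is dispensable for the corollary as stated; the proposition's genuine content is the extra requirement that $D$ can be chosen escaping to infinity (all sublevel sets $\{x\in D:\rho(x)<c\}$ finite), which is what the paper needs later when constructing the sets $D_n$ in the bundle-lifting theorem, but which the corollary does not claim. Minor cosmetic points: countability of the set of components is irrelevant to your argument (discreteness is local and the components are open, and the closure identity $\overline{\bigcup_i \overline{f(D_i)}^{Zar}}^{Zar}=\overline{\bigcup_i f(D_i)}^{Zar}$ holds for arbitrary unions), and when you apply the proposition to the target $\overline{f(C_i)}^{Zar}$ you may note that this closure is irreducible because $C_i$ is connected, in case ``variety'' is read as irreducible.
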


\begin{proof}
Let ${\mathcal I}$ be the set of all discrete subsets $D\subset C$ for which
\begin{enumerate}
\item
all the sets $\{x\in D:\rho(x)<c\}$ are finite and
\item
the Zariski closure of $f(D)$ in $X$ is irreducible.
\end{enumerate}
Then we define ${\mathcal M}\subset{\mathcal I}$ as the family of all
those $D\in{\mathcal I}$ which are maximal in the following sense:
{\em If $D'\in{\mathcal I}$ with 
$\overline{f(D)}^{Zar}\subset\overline{f(D')}^{Zar}$,
then
$\overline{f(D)}^{Zar}=\overline{f(D')}^{Zar}$.}

We claim that there are only finitely many subvarieties $V\subset X$
arising as the Zariski closure of an image $f(D)$ for some 
$D\in {\mathcal M}$. Indeed, let us assume the converse and let
$D_n$ be an infinite sequence in ${\mathcal M}$ for which
the subvarieties $\overline{f(D_n)}^{Zar}$ are all different.
Then we define $\Delta\subset C$ via
\[
\Delta=\cup_{n\in\N}\{x\in D_n:\rho(x)>n\}
\]
By construction $\{x\in\Delta:\rho(x)<c\}$ is finite for every $c\in\R$.
This implies that $\Delta$ is discrete. For each $n\in\N$ the set $\Delta$
contains all but finitely many elements of $D_n$.
Since $\overline{f(D_n)}^{Zar}$ is irreducible, it follows that
\[
\overline{f(D_n)}^{Zar}=\overline{f(D_n\cap\Delta)}^{Zar}\subset
\overline{f(\Delta)}^{Zar}
\]
for each $n\in\N$.
Now let $Z$ be an irreducible component of $\overline{f(\Delta)}^{Zar}$.
Then $Z\cap\Delta\in{\mathcal I}$. 
By the maximality assumption for the
$D_n$ it follows that $\overline{f(D_n)}^{Zar}$ is an irreducible
component of $\overline{f(\Delta)}^{Zar}$ for each $n\in\N$. Since
$\overline{f(\Delta)}^{Zar}$ has only finitely many irreducible components,
we deduce that there exist only finitely many subvarieties
$V\subset X$ realizable as $V=\overline{f(D)}^{Zar}$ for some $D\in
{\mathcal M}$.

Now let $Z_0$ be the union of all such $V$. Since there are only finitely
many such $V$, the union $Z_0$ is a subvariety.

Next we claim that $Z_0=X$.
Indeed, if not, then $f^{-1}(X\setminus Z_0)$ is a dense open subset of $C$
which allows as to choose an infinite subset $D'\subset C$ such that
$f(D')\cap Z_0=\{\}$ and such that $\{x\in D':\rho(x)<c\}$ is finite
for every $c\in\R$. Now for every irreducible component $W$
of $\overline{f(D')}^{Zar}$ we obtain an element $W\cap D'\in{\mathcal I}$.
By noetherianity there must exist an element $D\in{\mathcal M}$ with
$\overline{f(W\cap D')}^{Zar}\subset\overline{f(D)}^{Zar}$ which leads to a
contradiction, because $f(D')$ does not intersect $Z_0$.
Thus $Z_0=X$. Since $X=\overline{f(C)}^{Zar}$ is irreducible, it follows
that $X=\overline{f(D)}^{Zar}$ for some $D\in{\mathcal M}$.
\end{proof}

\begin{theorem}
Let $G$ be a complex algebraic group and $\pi:E\to X$ 
be a complex algebraic $G$-principal bundle
(locally trivial in the \'etale topology).

Then for every holomorphic map $f:\C\to X(\C)$ there exists a
holomorphic map $F:\C\to E(\C)$ such that
\begin{enumerate}
\item $f=\pi\circ F$,
\item The Zariski closure of the image $F(\C)$ coincides with
the preimage $\pi^{-1}(\overline{f(\C)}^{Zar})$ of the Zariski
closure of the image of $f$.
\end{enumerate}
\end{theorem}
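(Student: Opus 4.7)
The plan has three stages: construct an initial lift $F_0:\C\to E$, reduce to the case $\overline{f(\C)}^{Zar}=X$, and then modify $F_0$ multiplicatively to achieve the required Zariski closure (assuming $G$ connected, which is necessary for the equality in (2)). Since $\pi:E\to X$ is \'etale-locally trivial, its analytification is a holomorphic principal $G$-bundle; hence $f^*E\to\C$ is a holomorphic principal $G$-bundle over the contractible Stein manifold $\C$, and Grauert's Oka principle makes it holomorphically trivial, yielding a holomorphic lift $F_0$. Setting $Y=\overline{f(\C)}^{Zar}$ and replacing $E$ by $\pi^{-1}(Y)$ (irreducible because $\C$ is connected and $G$ is connected), I may assume $f(\C)$ is Zariski dense in $X$ and seek $F$ with $\overline{F(\C)}^{Zar}=E$.

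Every lift of $f$ has the form $F=F_0\cdot h$ for a holomorphic $h:\C\to G$, and I write $W(h):=\overline{(F_0\cdot h)(\C)}^{Zar}$. Since $\C$ is connected, $W(h)$ is an irreducible closed subvariety of $E$. Consider the family $\mathcal W=\{W(h)\}$ and call $W\in\mathcal W$ maximal when no element of $\mathcal W$ properly contains it; because any strict chain of irreducible closed subvarieties of $E$ has length at most $\dim E+1$, every $W(h)$ is contained in some maximal element.

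The key step is to show, mimicking the maximality argument in the proof of Proposition~\ref{disc-dense}, that $\mathcal W$ has only finitely many maximal elements. Suppose instead that $W_n=W(h_n)$ for $n\in\N$ are infinitely many distinct maximals. I apply Proposition~\ref{disc-dense} to each $F_0\cdot h_n:\C\to W_n$ with control function $\rho(z)=|z|$ to obtain a discrete $D_n\subset\C$ with $(F_0\cdot h_n)(D_n)$ Zariski dense in $W_n$ and $\{d\in D_n:|d|<c\}$ finite, then thin to $D_n':=D_n\cap\{|z|>n\}$. This thinning removes only finitely many points so preserves Zariski density in $W_n$, and the disjoint union $D=\bigsqcup_n D_n'$ is discrete in $\C$. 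Using the surjection $\phi:\C^N\to G$ from Proposition~\ref{cn-Lie-surj} together with componentwise Weierstrass interpolation on $\C$, I construct a holomorphic $h:\C\to G$ with $h(d)=h_n(d)$ whenever $d\in D_n'$. Then $W(h)\supset W_n$ for every $n$; irreducibility of $W(h)$ forces it to lie inside a single maximal $W_m$, so maximality of each $W_n$ would give $W_n=W_m$ for every $n$, contradicting distinctness.

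With finitely many maximals $W_1,\ldots,W_r$, set $W^*=\bigcup_i W_i$. If $W^*\subsetneq E$, the map $(z,g)\mapsto F_0(z)g$ has Zariski-dense image (its Zariski closure is $G$-invariant and projects onto $X$), so there exists $(z_0,g_0)$ with $F_0(z_0)g_0\notin W^*$; any holomorphic $h$ with $h(z_0)=g_0$ then satisfies $W(h)\not\subset W^*$, contradicting that $W(h)$ lies in some $W_i$. Hence $W^*=E$ and, by irreducibility of $E$, some $W_i=E$, producing the desired $h$. The main obstacle is the interpolation step inside the finiteness argument: combining the data from the different $h_n$ into a single holomorphic $h$ requires both the thinning to $\{|z|>n\}$ so that $D$ stays discrete and Proposition~\ref{cn-Lie-surj} to reduce holomorphic interpolation on the possibly non-abelian Lie group $G$ to the Weierstrass case in $\C^N$.
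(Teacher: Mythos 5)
Your proposal is correct in substance and uses the same toolkit as the paper (Grauert's Oka principle for the initial lift, Proposition~\ref{disc-dense}, and the surjection $\C^N\to G$ of Proposition~\ref{cn-Lie-surj} combined with interpolation on a discrete set thinned by the condition $|z|>n$), but it organizes the endgame differently. The paper's proof is an explicit construction: it compares the Grauert section with an algebraic section $\eta$ over an \'etale cover $V\to U$ trivializing $E$, and interpolates so that the modified section passes, over countably many discrete sets mapping Zariski-densely to $V$, through the translates $\eta(V)\cdot\alpha(n)$ by a countable dense subset $\{\alpha(n)\}\subset G$. You instead run a maximality argument directly on the family of Zariski closures $W(h)$ of all lifts $F_0\cdot h$, merging infinitely many would-be maximal closures by interpolation and then excluding a proper union of maximal elements by a one-point modification; this is close in spirit to the paper's own proof of Proposition~\ref{disc-dense} and to the Stein-manifold proposition at the end of Section~2, and it avoids ever touching the algebraic trivialization data ($\eta$, $\zeta$). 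Your caveat that $G$ must be connected for assertion (2) agrees with the paper (Theorem~1(a)(2)); connectedness also enters through Proposition~\ref{cn-Lie-surj} and the irreducibility of $\pi^{-1}(Y)$ (which deserves a word: $\pi^{-1}(Y)\to Y$ is a smooth, hence open, surjection with irreducible fibres over an irreducible base).

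There is one step that, as written, does not go through: you form ``the disjoint union $D=\bigsqcup_n D_n'$'', but the sets $D_n'$ are chosen independently and need not be disjoint; at a common point $d\in D_n'\cap D_m'$ the prescriptions $h(d)=h_n(d)$ and $h(d)=h_m(d)$ may conflict, so the interpolation problem is ill-posed. The repair is easy and is exactly what the paper does in its own proof (where it sets $C=V'\setminus\bigcup_{k<n}\tau^{-1}(\tau(D_k))$): choose the $D_n$ recursively, applying Proposition~\ref{disc-dense} on $\C\setminus\bigcup_{k<n}D_k'$, which is still a connected complex manifold with $|z|$ unbounded, and on which $F_0\cdot h_n$ still has Zariski-dense image in $W_n$ because deleting a discrete set from $\C$ does not change the Zariski closure of the image (the preimage of that closure is an analytic subset of $\C$ containing a dense subset, hence all of $\C$). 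A second, harmless, loose end: thinning $D_n$ to $\{|z|>n\}$ preserves Zariski density only when $\dim W_n\ge 1$; but if some $W_n$ is a point then the lift, hence $f$, is constant, in which case the theorem reduces to the corollary of Proposition~\ref{cn-Lie-surj} and there is a unique maximal element anyway. With these two points addressed, your argument is complete.
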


\begin{proof}
There is no loss in generality in assuming that $f(\C)$ is
Zariski dense in $X$.

The pull back bundle $E\times_X\C\to\C$
is a holomorphic $G$-principal bundle over $\C$. It is topologically
trivial, because $\C$ is a contractible topological space.
Due to Grauert's Oka principle (\cite{Gr}) this implies that it
is holomorphically trivial and therefore admits a holomorphic section
$\sigma:\C to E\times_X\C$.

On the other hand, $X$ contains a Zariski open subset $U$ which admits
an \'etale cover $V\to U$ such that the pull-back bundle of $E$ is trivial
over $V$.
Define $U'=f^{-1}(U)$ and $V'=V\times_U U'$. 
Now we can choose two trivializations of the pull-back bundle over $V'$:
First, we choose a holomorphic section 
 $\sigma:\C\to E\times_X\C$ which yields a holomorphic section
 $\sigma'$
of $E'=E\times_XV'\to V'$. Second, the assumption that
$E\times_XV\to V$ is algebraically trivial permits us to chose
an algebraic section $\eta$ of $E\times_XV\to V$ which induces
a holomorphic section $\eta'$
of $E'=E\times_XV\to V'\to V'$.

Then there exists a holomorphic map $\zeta:V'\to G$ such that
\[
\sigma'(x)=\eta'(x)\cdot \zeta(x) \forall x\in V'
\]
where $\cdot$ denotes the $G$-principal right action on
$E'=E\times_X V'$.

There is a unbounded continuous function $\rho:V'\to\R$ given by
$\rho(x)=|\tau(x)|$.

We claim that there exists sequences of discrete subsets $D_n\subset
V'$
with the following properties:
\begin{itemize}
\item
For each $n\in\N$ the projection map $\tau|_{D_n}:D_n\to\tau(D_n)$
is injective.
\item
All the sets $\tau(D_n)$ are disjoint and their union is discrete in
$\C$.
\item
For each $D_n$ the image in $V$ is Zariski-dense under the
natural map $V'=V\times_B\C\to V$.
\end{itemize}

We choose the subsets $D_n$ recursively.
Suppose $D_1,\ldots,D_{n-1}$ already chosen.
Let $C=V'\setminus\cup_{k<n}\tau^{-1}(\tau(D_k))$ and $X=V$ and
use proposition~\ref{disc-dense}. Since $\tau:V'\to\C$ has finite
fibers, we can decompose the subset $D$ thus obtained as a
finite union of $D=\Delta_1\cup\ldots\Delta_r$ such that
no fiber $\tau^{-1}(x)$ ($x\in\C$) intersects any $\Delta_i$
in more than one point. The union of the Zariski closures of the
images of the $\Delta_i$ in $V$ equals tha Zariski closure of
the image of $D$ which is $V$. Since $V$ is irreducible,
we deduce that we can choose one $\Delta_i$ for which the image
in $V$ is Zariski dense. We fix this index $i$ and choose $D_n$
as
$D_n=\{x\in \Delta_i:\rho(x)=|\tau(x)|>n\}$.

Chosen in this way, the sequence of subsets $D_n$ has the desired
pro\-per\-ties.

Next we fix a countable dense subset $\Sigma\subset G$
and a bijection $\alpha:\N\to\Sigma$.
There is surjective holomorphic map $\xi$ from some
vector space $\C^N$ onto $G$ (theorem~\ref{cn-Lie-surj}).
We choose elements $v_n\in\C^N$ such that
$\xi(v_n)=\alpha(n)$ for all $n\in\N$.

Define $\Delta=\cup_n\tau(D_n)\subset\C$.
By construction $\Delta$ is discrete in $\C$.
We can thus chose a holomorphic function
$\Phi:\C\to\C^N$ with $\Phi(\tau(x))=v_n$ for all $n\in\N$ and all
$x\in D_n$.
Define $\beta=\xi\circ\Phi:\C\to G$. Then
$\beta(\tau(x))=\alpha(n)$ for all $x\in D_n$.

Again using the surjective map $\xi:\C^N\to G$, we may chose a holomorphic function
$\gamma:\C\to G$ with $\gamma(\tau(x))=\zeta(x)$ for all $n\in\N$ and all
$x\in D_n$. 

Next we define a new section $\tilde\sigma$ in the pull-back bundle
$E\times_X\C\to\C$  as follows:
\[
\tilde\sigma(z)=\sigma(z)\cdot(\gamma(z)^{-1})\cdot
\beta(z) \quad (z\in\C)
\]
where $\cdot$ denote the right $G$-action on the principal bundle.

Then
\begin{align}
\tilde\sigma(\tau(x))&=\sigma(\tau(x))\cdot(\zeta(x)^{-1})\cdot
\beta(\tau(x))\\
&=\eta'(x)\cdot
\beta(\tau(x))
\end{align}
for all $x\in V'$ and therefore
\[
\tilde\sigma(\tau(x))=\eta'(x)\cdot\alpha(n)
\]
for all $x\in D_n$ (and all $n\in\N$).

Let $Z=\eta(V)\subset E_XV$. This is an algebraic subvariety, since
$\eta$ is algebraic. Let $\epsilon$ denote the natural map
from $E'=E_XV'$ to $E\times_XV$ and let $\chi$ denote the natural map
from $V'$ to $V$.
Then
\[
\epsilon(\tilde(\sigma((\tau(D_n)))
=\epsilon(\eta(D_n))\cdot\alpha(n)
=\eta(\chi(D_n))\cdot\alpha(n)
\] 
is Zariski
dense in $Z\cdot\alpha(n)$. As a consequence,
the (algebraic)
Zariski closure of $\cup_n\epsilon(\tilde\sigma(\tau(D_n)))$ contains all
$Z\cdot\alpha(n)$
($n\in\N$). This implies that 
$\epsilon(\tilde\sigma(\tau(V')))$ is Zariski
dense in $E'$. Therefore $\epsilon(\tilde\sigma(\C))$ induces a holomorphic map
from $\C$ to $E$ with Zariski dense image.
Hence the assertion.
\end{proof}

\begin{proposition}
Let $C$ be a Stein complex manifold, let $X$ be a complex algebraic variety
on which a connected complex Lie group $G$ acts and let $f:C\to X$
be a holomorphic map.

Then there exists a holomorphic map $\alpha:C\to G$ such that the
map $F:C\to X$ defined by $F(x)=f(x)\cdot\alpha(x)$
fulfills the following property:

The (algebraic) Zariski closure of $F(C)$ is $G$-invariant and 
contains the Zariski closure of $f(C)$.
\end{proposition}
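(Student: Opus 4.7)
The plan is to imitate the construction used in the proof of the preceding theorem, but applied directly to the given $G$-action on $X$ rather than to a principal bundle. Let $Y=\overline{f(C)}^{Zar}$; I aim to build $\alpha:C\to G$ so that $F(z)=f(z)\cdot\alpha(z)$ hits enough different $G$-translates of $Y$ to force the Zariski closure of $F(C)$ to be $G$-invariant and to contain $Y$.

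First I would apply proposition \ref{disc-dense} iteratively to construct a sequence of pairwise disjoint discrete subsets $D_n\subset C$ ($n\in\N$) such that each $f(D_n)$ is Zariski dense in $Y$ and the union $D=\bigcup_n D_n$ is itself discrete in $C$. At the $n$-th stage I work on $C_n=C\setminus(D_1\cup\cdots\cup D_{n-1})$, a dense connected open submanifold on which $f$ still has Zariski closure $Y$ (its complement in $C$ is discrete, hence Euclidean negligible), and I arrange $D_n\subset\{\rho>n\}$ for a chosen unbounded continuous exhaustion $\rho:C\to\R^{+}$ so as to keep $D$ discrete in $C$. Next, fix an enumeration $\{g_n\}_{n\in\N}$ of a countable Euclidean-dense subset of $G$ and, using proposition \ref{cn-Lie-surj}, a surjective holomorphic map $\xi:\C^N\to G$ with chosen lifts $v_n\in\xi^{-1}(g_n)$. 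Since $C$ is Stein and $D$ discrete, holomorphic interpolation on a Stein manifold supplies a holomorphic $\Phi:C\to\C^N$ with $\Phi|_{D_n}\equiv v_n$. Setting $\alpha=\xi\circ\Phi$ and $F(z)=f(z)\cdot\alpha(z)$ gives $F(D_n)=f(D_n)\cdot g_n$, which (when the action is algebraic) is Zariski dense in $Y\cdot g_n$, so $\overline{F(C)}^{Zar}$ contains each $Y\cdot g_n$.

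Because Zariski-closed subsets of $X$ are Euclidean closed and the orbit maps $g\mapsto y\cdot g$ are continuous, approximating arbitrary $g\in G$ by the dense sequence $g_n$ upgrades this to $\overline{F(C)}^{Zar}\supseteq Y\cdot g$ for every $g\in G$, whence $\overline{F(C)}^{Zar}\supseteq\overline{Y\cdot G}^{Zar}$. When the $G$-action is algebraic, this latter closure is itself $G$-invariant and the proof concludes. In the purely holomorphic case I would iterate: replace $f$ by $F$ and repeat the construction, producing an ascending chain of Zariski closures in $X$; Noetherianity forces stabilization after finitely many steps, and stabilization gives a terminal $W$ with $\overline{W\cdot G}^{Zar}=W$, hence $W\cdot G\subseteq W$ and $W$ is $G$-invariant. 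Each iteration preserves the form $f\cdot\alpha$ with $\alpha:C\to G$ holomorphic, since the successive corrections multiply on the right inside $G$.

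The main obstacle is the first step: the recursive production of the disjoint discrete sets $D_n$ each having $f$-image Zariski dense in $Y$. This requires the same kind of bookkeeping used in the proof of the preceding theorem, combining proposition \ref{disc-dense} on the shrinking complements $C_n$ with the exhaustion trick to keep $D$ discrete; the remaining ingredients (Stein interpolation and the surjection from $\C^N$) are standard.
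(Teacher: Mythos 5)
Your construction has a genuine gap at its central step. You conclude from ``$f(D_n)$ Zariski dense in $Y$'' that $F(D_n)=f(D_n)\cdot g_n$ has Zariski closure containing $Y\cdot g_n$, and you yourself flag that this needs the action to be algebraic. But the proposition only assumes that a connected complex Lie group $G$ acts (holomorphically) on the algebraic variety $X$: the right translations $x\mapsto x\cdot g$ are merely holomorphic automorphisms of $X(\C)$, and Zariski density is not transported by such maps. Concretely, let $X=\C^2$ and let $G=(\C,+)$ act by $t\cdot(x,y)=(x,\,y+t\sin(\pi x))$; with $Y=\C\times\{0\}$ the set $E=\Z\times\{0\}$ is Zariski dense in $Y$, yet $E\cdot 1=E$ has Zariski closure $Y$, which does not contain $Y\cdot 1=\{(x,\sin(\pi x)):x\in\C\}$. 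Your fallback for the ``purely holomorphic case'' does not repair this: the enlargement mechanism at every iteration is exactly the failing step, and stabilization of the chain of closures does not by itself give $\overline{W\cdot G}^{Zar}=W$ --- to extract a strict increase from non-invariance you need precisely the ingredient that is missing. (In the special case where every translation is an algebraic morphism your argument is fine, and in fact identifies $\overline{F(C)}^{Zar}$ as $\overline{Y\cdot G}^{Zar}$; the bookkeeping with the sets $D_n$, the proper exhaustion, and Stein interpolation through $\xi:\C^N\to G$ is sound. But that is not the stated generality.)

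The paper's proof avoids translating dense sets by a fixed group element altogether. It considers all $F_\alpha(x)=f(x)\alpha(x)$ and chooses $\alpha$ maximizing $\dim\overline{F_\alpha(C)}^{Zar}$; it then takes a discrete $D\subset C$ with $\overline{F_\alpha(D)}^{Zar}=\overline{F_\alpha(C)}^{Zar}$ (corollary to Proposition~\ref{disc-dense}). If $A=\overline{F_\alpha(C)}^{Zar}$ were not $G$-invariant, pick $p\in C\setminus D$ and $g\in G$ with $F_\alpha(p)\cdot g\notin A$, and interpolate (Stein manifold plus the surjection of Proposition~\ref{cn-Lie-surj}) a holomorphic $\beta:C\to G$ with $\beta=\alpha$ on $D$ and $\beta(p)=\alpha(p)g$. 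Then $\overline{F_\beta(C)}^{Zar}\supset F_\beta(D)=F_\alpha(D)$, hence $\overline{F_\beta(C)}^{Zar}\supset A$; maximality of the dimension and irreducibility force equality, contradicting $F_\beta(p)\notin A$. Because only the values of $\alpha$ on a discrete set are retained --- no Zariski-dense set is pushed around by a fixed $g$ --- no algebraicity of the action is needed, and the containment $\overline{f(C)}^{Zar}\subset A$ follows for free from $f(x)=F_\alpha(x)\cdot\alpha(x)^{-1}\in F_\alpha(x)\cdot G$ once $A$ is $G$-invariant. To salvage your route you would either have to add the hypothesis that the action is algebraic, or replace the ``translate a dense set'' step by a pointwise perturbation argument of this kind.
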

\begin{proof}
For each holomorphic map $\alpha:C\to G$ let $F_\alpha:C\to X$
denote the map defined by $F_\alpha(x)=f(x)\alpha(x)$.
Among all these maps $F_\alpha$ we choose one for which the
dimension of the Zariski closure of the image $F_\alpha(C)$ in $X$
is maximal.
We claim that this $F_\alpha$ has the desired property.

Let $D\subset C$ be a discrete subset for which the Zariski closures
of $f(C)$ and $f(D)$ coincide.
Now assume that the Zariski closure $A$ of $F_\alpha(C)$ is not
$G$-invariant. Then there exists a point $p\in C\setminus D$ such that
$A$ does not contain the $G$-orbit through $F_\alpha(p)$.
Fix such a point $P$ and an element $g\in G$ for which $p\cdot g\not\in A$.
Next we choose a holomorphic map $\beta:C\to G$ such that
$\beta(x)=\alpha(x)$ for every $x\in D$ and such that
$\beta(p)=\alpha(p)\cdot g$. Then the Zariski closure of $F_\beta(C)$
contains $F_\alpha(D)=F_\beta(D)$. By the choice of $D$ it follows
that
the Zariski closure of $F_\beta(C)$ contains that of
$F_\alpha(C)$.
By the maximality condition and the fact that $C$ is an irreducible
space it follows that the Zariski closure of $F_\alpha(C)$ and
$F_\beta(C)$ coincide.
 This is a contradiction, because $p\cdot g$ is contained
in $F_\beta(C)$, but not in the Zariski closure of
$F_\alpha(C)$.
Hence $\overline{F_\alpha(C)}$ needs to be $G$-invariant.
\end{proof}

\section{Integral sets}
We use the notation of Lang and Vojta.

Let $X$ be a projective variety defined over a number field $K$
with algebraic closure $\bar K$,
$M$ the set of all places of $K$,
$S$ a set of places containing all the archimedean ones, and
$D$ a Cartier divisor on $X$ defined over $K$.
In this situation Lang defined (\cite{L}) 
the notion of a ``Weil function''
$\lambda_D:M\times X(\bar K)\to\R$.

A collection of real numbers $C_v$ (with $v\in M$) is called
an ``$M$-constant'' if $C_v=0$ for all but finitely many $v\in M$.

If $D$ is very ample and $i:X\setminus|D|\hookrightarrow\A^N$ a closed
embedding given by the sections of $D$, 
then a possible choice for a Weil function is
\[
\lambda_D(v,x)=\log^+\max_{k=1\ldots N}|(i(x))_k|_v
\]
where $\log^+(w)$ is defined as $\max\{0,w\}$.

A set $A\subset X(K)$ is called ``$(S,D)$-integral'' if there
is an $M$-constant $C_v$ such that $\lambda_D(p,v)\le C_v$ for
all $p\in A$ and $v\not\in S$.

\subsection{Non-compact varieties}
Let $X$ be a non-complete variety. By a result of Nagata \cite{Na}
$X$ can be embedded into a complete variety $\bar X$. Due to the
desingularization theorems in characteristic zero we may assume
that $\bar X$ is smooth and $\bar X\setminus X$ is a s.n.c.~divisor
provided $X$ is smooth.

\begin{definition}
Let $X$ be a (non-complete) smooth variety. 

A function $\mu:X(\bar k)\times M\to\R$
is called ``Weil function for $X$'' if there exists
a completion $X\hookrightarrow \bar X$
by a divisor $D=\bar X\setminus X$ and constants $C>1$, $C'>0$
such that
\[
C\lambda_D(x,v)+C' \ge \mu(x,v) \ge \frac{1}{C}\lambda_D(x,v)-C'
\]
for all $x\in X(\bar k)$.
\end{definition}

\begin{definition}
A subset $A\subset X(k)$ is called an ``$S$-integral set''
iff there is a Weil function $\lambda$ for $X$
such that
$\lambda(x,v)\le 0$ for all $x\in A$, $v\in M_k\setminus S$.

$A$ is called an ``integral set'' if there exists a finite subset
$S\subset M_k$ which includes all archimedean places for which
$A$ is an $S$-integral set.
\end{definition}

Equivalently: A subset $A\subset X(k)$ is an integral set
if there exists a compactification $X\hookrightarrow\bar X$
such that $D=\bar X\setminus X$ is a Cartier divisor and
$A$ is an $(S,D)$-subset of $\bar X$.

\begin{lemma}
Let $X$ be a variety. If $\lambda$ and $\tilde\lambda$ are
two Weil functions for $X$, then
there is a real number $C\ge 1$ and $M$-constants $a_v$ such that
$\lambda_v\le a_v+C\tilde\lambda_v$.
\end{lemma}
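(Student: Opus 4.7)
\emph{Proposal.} My plan is to reduce the comparison of two arbitrary Weil functions for $X$ to the comparison of the two Weil functions $\lambda_D,\lambda_{\tilde D}$ attached to the boundary Cartier divisors of two compactifications of $X$, and then to compare the latter on a common completion of $X$ dominating both.

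By the definition of a Weil function for $X$, there exist completions $X\subset\bar X$ and $X\subset\tilde{\bar X}$, with boundary Cartier divisors $D$ and $\tilde D$, together with constants which realise a two-sided sandwich between $\lambda$ and $\lambda_D$, and between $\tilde\lambda$ and $\lambda_{\tilde D}$. It therefore suffices to exhibit a positive real number $c$ and an $M$-constant $a_v$ with $\lambda_D(x,v)\le c\,\lambda_{\tilde D}(x,v)+a_v$ on $X(\bar K)$.

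To build a common completion I would take $Y$ to be (a desingularisation of) the Zariski closure of the diagonal image of $X$ in $\bar X\times\tilde{\bar X}$. The two projections yield proper birational morphisms $p\colon Y\to\bar X$ and $q\colon Y\to\tilde{\bar X}$, both restricting to the identity on $X$. Since $p$ and $q$ are isomorphisms over $X$, the pullbacks $E=p^*D$ and $\tilde E=q^*\tilde D$ are effective Cartier divisors on the complete variety $Y$, each of whose support equals $Y\setminus X$.

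Because $|E|=|\tilde E|$ on the complete variety $Y$, there exists $n\in\N$ with $n\tilde E-E$ effective. Now the classical properties of Weil functions attached to Cartier divisors --- functoriality under the morphisms $p$ and $q$, additivity under sums of divisors, and the bound $\lambda_F\ge-(M\text{-constant})$ for any effective Cartier $F$ --- give on $X(\bar K)\subset Y(\bar K)$ the chain
\[
\lambda_D(x,v)=\lambda_E(x,v)+O_M(1)\le \lambda_{n\tilde E}(x,v)+O_M(1)=n\,\lambda_{\tilde D}(x,v)+O_M(1),
\]
where $O_M(1)$ denotes an $M$-constant. Concatenating with the two initial sandwich estimates produces the desired inequality $\lambda_v\le a_v+C\tilde\lambda_v$. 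The one genuinely geometric step, and the only plausible obstacle, is confirming that on the common completion $Y$ the supports of $E$ and $\tilde E$ truly coincide with $Y\setminus X$; this is taken care of by the fact that $p$ and $q$ are isomorphisms over $X$, after which the remainder is routine bookkeeping with known properties of Weil functions.
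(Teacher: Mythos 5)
Your proposal is correct and follows essentially the same route as the paper: form the closure $Y$ of the diagonally embedded $X$ in $\bar X\times\tilde{\bar X}$, observe that the pulled-back boundary divisors have the same support $Y\setminus X$ (which holds because the graph of the inclusion $X\hookrightarrow\tilde{\bar X}$ is closed in $X\times\tilde{\bar X}$ by separatedness), and compare the associated Weil functions. You merely spell out the final step (choosing $n$ with $n\tilde E-E$ effective and invoking functoriality, additivity and the lower bound for effective divisors) that the paper compresses into ``Hence the statement.''
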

\begin{proof}
If $X\hookrightarrow\bar X$ and $X\hookrightarrow\tilde X$
are two completions by divisors, we may consider the diagonal
embedding $X\hookrightarrow\bar X\times\tilde X$. Let $Y$
denote the closure of $X$ in $\bar X\times\tilde X$.
Let $D=\bar X\setminus X$ and $\tilde D=\tilde X\setminus X$.
Then $p_1^*D$ and $p_2^*\tilde D$ have the same support in 
$Y$. Hence the statement.
\end{proof}

\subsection{Examples}
\begin{enumerate}

\item If $V$ is projective, any set of $K$-rational points is integral.

\item If $V$ is affine, a subset $A\subset V(K)$ is $S$-integral iff
there exists a proper embedding $i\hookrightarrow\A^N$ with
$i(A)\subset\A^N(\O_S)$.

\item If $V=\A^n\setminus\{(0,\ldots,0)\}$
an $S$-integral set is given by taking all $(x_1,\ldots,x_n)\in\O_S^n$
such that $x_1,\ldots,x_n$ generate the unit ideal of $\O_S$.

\item If $V=\P^n\setminus\{[1:0:\ldots:0]\}$
an $S$-integral set is given by taking all $[x_0:\ldots:x_n]$
such that $x_0$ is contained in $x_1\O_S+\ldots+x_n\O_S$.

\item If $V=G_m$ (the multiplicative group), the subgroup of units $\O_S^*$
is an $S$-integral set.

\item If $A$ is an $S$-integral set in a variety $V$ which admits
a morphism $f$ to a variety $W$, then $f(A)$ is a $S$-integral set.

\item
Let $\pi:V\to W$ be an \'etale covering, defined over some number field $K$
and $A$ a $S$-integral subset of $W$. Then there is a finite
field extension $F/K$ such that  $\pi^{-1}(A)\subset V(F)$. Moreover
$\pi^{-1}(A)$ is a $T$-integral set where $T$ denotes the set of all 
non-archimedean places of $F$ lying over $S$.

\item
Every finite subset is an integral set.

\item
If $A$ is a $S$-integral subset of a variety $W$ and $f:V\to W$
is a proper morphism, all defined over a number field $K$,
then $f^{-1}(A)\cap V(K)$ is a $S$-integral set.

\end{enumerate}

\section{Fibrations}
In real geometry, if $G$ is a Lie group and $H\subset I\subset G$
are closed Lie subgroups then the fibration $G/H\to G/I$ is 
``locally trivial'' in the ordinary topology.
In algebraic geometry, if $G$ is an algebraic group
and $H\subset I\subset G$ are algebraic subgroups, then
$G/H\to G/I$ is not ne\-ces\-sa\-rily locally trivial with respect to
the Zariski topology, but always locally trivial with respect
to \'etale topology as already observed in \cite{S}.
Following \cite{S}, an algebraic group is called ``special''
if such a fibration is necessarily locally trivial for the
Zariski topology. 
A non-trivial finite group can not
be special: For every such group $G$ there is a 
unramified Galois covering $X'\to X$ of complex algebraic curves
with $G$ as Galois group. This is a $G$-principal bundle which
is evidently not locally trivial in the Zariski topology.

On the other hand, all solvable connected algebraic
groups as well as certain semisimple groups like e.g.~$SL_n$
are special. However, there are simple connected algebraic
groups which are not special, e.g. $SO_n$ (\cite{S}).

From now on a $G$-principal bundle $\pi:E\to B$ is a principal bundle
which is locally trivial with respect to the \'etale topology.
In other words, there is a free right action of $G$ on $B$ such that
the $G$-orbits are the fibers of $\pi$ and for every point $p\in B$
there is a Zariski open neighbourhood $U$ and an \'etale covering
$\tau:V\to U$ such that there is a $G$-equivariant isomorphism
between $V\times_B E$ and $V\times G$.
If $p\in B(k)$, then $\pi^{-1}(p)$ is a $G$-principal
homogeneous space.
If there is a $k$-rational point $q\in V(k)$ with $\tau(q)=p$,
this principal homogeneous space is isomorphic to $G$, i.e., it
contains $k$-rational points. However, in general for an \'etale
morphism $\tau$ one can not find a $k$-rational point in $V$
over each $k$-rational point in $U$. For an easy example of this
phenomenon
consider $\tau:V\to U$ given by $V=U=\A\setminus\{0\}$ and
$\tau(z)=z^2$.

\section{Restricted topological product}
Let us recall the theory of restricted topological products.

If $(X_\lambda)$ is a family of locally compact topological
spaces, each endowed with a compact open subspace $Y_\lambda$,
then one can define a ``restricted topological product''.
The idea is to modify the direct product of all $X_\lambda$
in such a way that the resulting space is locally compact.
(In general an infinite product of locally compact spaces
is not locally compact.) As a set, the ``restricted topological
product'' contains all
\[
x=(x_\lambda)\in\Pi_\lambda X_\lambda
\]
such that $x_\lambda\not\in Y_\lambda$ for only finitely
many $\lambda$.
Thus, $X=\cup_SX(S)$ where $S$ runs through all finite subsets of
the index set and $X(S)=\left(\Pi_{\lambda\in S}X_\lambda\right)
\times  \left(\Pi_{\lambda\not\in S}Y_\lambda\right)$.
Now a locally compact topology is introduced on $X$ as follows:
A subset $U\subset X$ is open if and only if $U\cap X(S)$
is open in $X(S)$ (with respect to the product topology on $X(S)$)
for all $S$. (Warning: This is not the topology obtained by embedded
$X$ into the direct product of all $X_\lambda$.)

\begin{lemma}\label{lem-cont-sum}
Let $X$ be as above and let $\rho_\lambda:X_\lambda\to\R$
be a collection of continuous functions such that $\rho_\lambda$
vanishes in $Y_\lambda$ for almost all $\lambda$.

Then
\[
\rho(x)=\sum_\lambda \rho_\lambda(x_\lambda)
\]
defines a continuous function on $X$.
\end{lemma}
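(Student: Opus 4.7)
The plan is to reduce continuity of $\rho$ on $X$ to continuity of its restriction to each of the subspaces $X(S)$ (for $S$ a finite subset of the index set), and then observe that on each such $X(S)$ with $S$ sufficiently large, the infinite sum in the definition of $\rho$ actually reduces to a finite sum of continuous functions of the coordinates.

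First I would fix, using the hypothesis, a finite set $S_0$ of indices outside of which $\rho_\lambda|_{Y_\lambda}\equiv 0$. For every $x=(x_\lambda)\in X$ only finitely many coordinates fail to lie in $Y_\lambda$, so for $\lambda\notin S_0$ we have $\rho_\lambda(x_\lambda)=0$ except for finitely many $\lambda$; thus the sum defining $\rho(x)$ has only finitely many nonzero terms and the function $\rho$ is at least well defined pointwise.

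Next I would invoke the defining property of the topology on $X$: a subset $U\subset X$ is open iff $U\cap X(S)$ is open in $X(S)$ for every finite $S$. Equivalently, a map $\rho:X\to\R$ is continuous iff its restriction to each $X(S)$ is continuous. Hence it suffices to show that $\rho|_{X(S)}$ is continuous for every finite $S$, and by enlarging $S$ we may assume $S\supseteq S_0$.

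Fix such an $S$. On $X(S)$, every coordinate $x_\lambda$ with $\lambda\notin S$ lies in $Y_\lambda\subset Y_\lambda$ (and $\lambda\notin S_0$), so $\rho_\lambda(x_\lambda)=0$. Consequently for $x\in X(S)$,
\[
\rho(x)=\sum_{\lambda\in S}\rho_\lambda(x_\lambda),
\]
a finite sum. Each summand is the composition of the (continuous) projection $X(S)\to X_\lambda$ with the continuous function $\rho_\lambda$, and finite sums of continuous functions are continuous; thus $\rho|_{X(S)}$ is continuous, and the lemma follows.

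There is no real obstacle here; the only thing to watch is the non-standard topology on the restricted product (which is strictly finer than the subspace topology from $\prod_\lambda X_\lambda$), and the proof hinges precisely on exploiting that finer topology via the characterization in terms of the exhausting subspaces $X(S)$.
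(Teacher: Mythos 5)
Your proof is correct and follows essentially the same route as the paper: reduce continuity of $\rho$ to continuity of its restriction to each $X(S)$, which is exactly what the defining topology on the restricted product allows, and then observe that on $X(S)$ the sum is finite. The only cosmetic difference is that you enlarge $S$ to contain the exceptional set $S_0$ (harmless, since $X(S)$ carries the subspace topology of $X(S\cup S_0)$), whereas the paper simply writes $\rho|_{X(S)}$ as the finite sum $\sum_{\lambda\in S\cup T}\rho_\lambda(x_\lambda)$ with $T$ the set of indices where $\rho_\lambda$ does not vanish on $Y_\lambda$.
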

\begin{proof}
First we note that for each fixed $x\in X$ the sum on the right is
actually finite, thus $\rho:X\to\R$ is well-defined.
Let $T$ be the set of all
$\lambda$ for which $\rho_\lambda$ does not vanish on $Y_\lambda$.
Then 
\[
\rho(x)=\sum_{\lambda\in S\cup T}\rho_\lambda(x_\lambda)
\]
for every finite index set $S$ and every $x\in X(S)$.
This being a finite sum implies that the restriction of $\rho$
to each $X(S)$ is continuous.
Therefore $\rho^{-1}(W)\cap X(S)$ is open for every $S$ and every
open subset $W\subset\R$. Hence $\rho$ is continuous.
\end{proof}

\section{Adelic groups}
Let $K$ be a number field, $M$ the set of absolute values,
 $M_\infty$ the set of archimedean
absolute values and $M_f$ the set of non-archimedean ones.
As usual, for each $v\in M$ we denote by $K_v$ the completion of $K$
with respect to the absolute value  $v$.

Then the ring of Adeles $\Ad$
is defined as the restricted topological
product of all $K_v$ where the role of
the compact open subsets $Y_\lambda$ is played
by
the ring of $v$-integers $\O_v=\{x\in K_v:|x|_v\le 1\}$ if
$v$ is non-archimedean and the empty set if $v$ is archimedean.

The ring $\Af$ of ``finite Adeles'' is the restricted topological
product for only the non-archimedean valuations.

It is easy to verify that$\A$ and $\Af$ are topological rings in a 
natural way.

\subsection{Adelic groups}
Let $G$ be a linear $K$-group. Then $G$ is defined as an algebraic
subgroup of $GL_n$ as the zero-set of certain polynomials with
coefficients in $K$.
For each $v\in M$ we define $G(K_v)$ and for each $v\in M_f$
a compact open subgroup $G(O_v)=G(K_v)\cap GL_n(O_v)$ where
$O_v$ is the ring of integers of $K_v$, i.e. 
$O_v=\{x\in K_v:||x||_v\le 1\}$.
As usual, $GL_n(O_v)$ denotes the subgroup of all matrices
$A$ in $GL_n(K_v)$ such that all matrix coefficients {\em and}
$1/\det(A)$ are in $O_v$.

The adelic group is the restricted topological product of all
these groups and will by denoted by $G(\A)$ (resp.~$G(\Af)$ if we
consider the restricted topological product with respect to only
the non-archimedean absolute values of $K$).

\subsection{A finiteness theorem of Borel}
A finiteness theorem of Borel has the following consequence which we
will
need later on.
\begin{proposition}\label{borel-finite}
Let $G$ be a linear algebraic group defined over some number
field $K$, $G(\Af)$ the finite adelic group and $G(K)$ the group of
$K$-rational points, embedded diagonally into $G(\Af)$.

Then there exists a compact subset $C\subset G(\Af)$
such that $C\cdot G(K)=G(\Af)$.
\end{proposition}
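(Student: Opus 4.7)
The plan is to obtain this proposition as a direct packaging of Borel's classical theorem on the finiteness of the class number of a linear algebraic group. The key point is to exhibit a compact open subgroup $U \subset G(\Af)$ such that $U \backslash G(\Af)/G(K)$ is finite, and then build $C$ as a finite union of translates of $U$.

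First, I would introduce the product
\[
U = \prod_{v\in M_f} G(O_v) \subset G(\Af).
\]
Every element of $U$ trivially satisfies the ``almost all integral'' condition defining the restricted product, so $U$ sits inside $G(\Af)$. In the restricted product topology, $U$ is precisely the piece $X(\emptyset)$, on which the topology coincides with the product topology; by Tychonoff $U$ is compact, and it is open because in the restricted product topology a neighborhood basis of the identity is given by products $\prod_{v\in S} V_v \times \prod_{v\notin S} G(O_v)$ with $S$ finite.

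Second, I would invoke Borel's finiteness theorem (``Some finiteness properties of adele groups over number fields''): for any linear algebraic group $G$ over the number field $K$, the set of double cosets
\[
U \backslash G(\Af) / G(K)
\]
is finite. Choose representatives $g_1,\dots,g_n \in G(\Af)$, so that $G(\Af) = \bigcup_{i=1}^{n} U g_i G(K)$.

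Finally, set $C = \bigcup_{i=1}^n U g_i$. Each $U g_i$ is a right translate of the compact set $U$ in the topological group $G(\Af)$, hence compact; so $C$ is a finite union of compact sets and is itself compact. By the very choice of the $g_i$ we have $C \cdot G(K) = G(\Af)$, as required. The only nontrivial input is Borel's theorem; the rest is just organizing the restricted product topology correctly and checking that the compact open subgroup $U$ behaves as one expects — the potential obstacle is purely one of bookkeeping to confirm that $U$ is both compact and open in the restricted product sense.
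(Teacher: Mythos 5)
Your proof is correct and follows essentially the same route as the paper: take the compact open subgroup $\prod_v G(O_v)$, invoke Borel's finiteness of the double coset space modulo $G(K)$, and let $C$ be the finite union of translates of that subgroup by coset representatives. The paper writes this as $C = H\cdot E$ with $E$ a finite set, which is exactly your $\bigcup_i U g_i$.
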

\begin{proof}
The locally compact group $G(\Af)$ contains
$H=\Pi_v G(\O_v)$ as a compact open subgroup.
By a theorem of Borel (\cite{B1}) the double quotient
$H\backslash G(\Af)/G(K)$ is finite.
Hence there is a finite subset $E\subset G(\Af)$ such that
$G(\Af)=H\cdot E\cdot G(K)$.
Evidently $C=H\cdot E$ is compact.
\end{proof}

\section{Weil functions for fiber bundles}

Here we relate Weil functions on fiber bundles to Weil functions on base
and fiber of such a bundle.

\begin{proposition}\label{bundle-weil-fct}
Let $\pi:E\to B$ be a Zariski locally trivial fiber bundle.
Let $(U_i)_{i\in I}$ be a family of
 Zariski open subvarieties of $B$ which cover
all of $B$ and such that $W_i=\pi^{-1}(U_i)\to U_i$ is trivial
for every $i\in I$,
i.e., there are morphisms $\zeta_i: W_i\to F$ such that
there is an isomorphism $W_i\to U_i\times F$ given by
$x\mapsto (\pi(x),\zeta_i(x))$. Assume all defined over
some number field $K$.

Assume the index set $I$ to be totally ordered.

Let $\lambda_B$, $\lambda_F$  be a Weil functions for $B$ 
resp.~$F$ and define $\mu$
as follows:
\[
\mu(x,v)=\lambda_{F,v}\circ\zeta_i(x)
\]
if for all $j\in I$ 
either $\lambda_{U_j,v}(\pi(x))>\lambda_{U_i,v}(\pi(x))$
or $j\ge i$ and $\lambda_{U_j,v}(\pi(x))=\lambda_{U_i,v}(\pi(x))$.

Then there is a global Weil function $\lambda_E$ for $E$ such that
$\lambda_E\le\mu+\lambda_B\circ\pi$.
\end{proposition}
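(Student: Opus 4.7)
The plan is to produce the claimed $\lambda_E$ by gluing the natural product Weil functions on the local trivializations and to use the covering property of $\{U_i\}$ to absorb an otherwise superfluous boundary term. Fix first compactifications $\bar B \supset B$ and $\bar F \supset F$ with boundary divisors $D_B$ and $D_F$, and work with the Weil functions attached to these divisors (replacing the given $\lambda_B,\lambda_F$ by equivalent ones via the lemma above). Set $E_i := B\setminus U_i$; the boundary divisor of $U_i$ inside $\bar B$ is then $D_B+\bar{E_i}$, so
\[
\lambda_{U_i}(y,v) = \lambda_B(y,v) + \lambda_{E_i}(y,v) + O_v(1).
\]
Over $U_i$ the trivialization $W_i \xrightarrow{\sim} U_i\times F$ embeds $W_i$ into $\bar B\times\bar F$ with boundary divisor $(\bar B\setminus U_i)\times\bar F + \bar B\times D_F$, so the associated local product Weil function on $W_i$ is $\lambda_{U_i}\circ\pi + \lambda_F\circ\zeta_i$ up to an $M$-constant.

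The decisive input comes from the covering property. Since $B=\bigcup_i U_i$, the closed subschemes $E_i$ have empty common intersection, so by the Nullstellensatz their defining ideals are comaximal. Reducing to a finite subcover by quasi-compactness of $B$ and using local defining functions $g_i\in\mathcal{I}(E_i)$ with $\sum_i g_i \equiv 1$, one obtains
\[
\min_{i\in I}\lambda_{E_i}(y,v)\le a_v \quad\text{for every } y\in B,
\]
with an $M$-constant $a_v$. Let $i^*=i^*(x,v)$ be the index selected in the definition of $\mu$: it is a minimizer of $j\mapsto\lambda_{U_j}(\pi(x),v)$ and, since $\lambda_B(\pi(x),v)$ does not depend on $j$, also a minimizer of $\lambda_{E_j}(\pi(x),v)$ up to uniform $M$-constants. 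Consequently $\lambda_{E_{i^*}}(\pi(x),v)\le a_v + O_v(1)$, which forces
\[
\lambda_{U_{i^*}}(\pi(x),v)\le\lambda_B(\pi(x),v)+O_v(1).
\]

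Finally I would glue the local closures $\bar{W}_i\subset \bar B\times\bar F$ of the trivializations into a global compactification $\bar E\supset E$ (equivalently, take a Nagata compactification and verify via the equivalence lemma that its Weil function agrees on each $W_i$, up to an $M$-constant, with the product Weil function), and let $\lambda_E$ be its Weil function. On the chart $W_{i^*}$ this gives
\[
\lambda_E(x,v) \le \lambda_{U_{i^*}}(\pi(x),v) + \lambda_F(\zeta_{i^*}(x),v) + O_v(1)
\le \lambda_B(\pi(x),v) + \mu(x,v) + O_v(1),
\]
and subtracting the $M$-constant from $\lambda_E$ leaves it in the same equivalence class and yields $\lambda_E\le\mu+\lambda_B\circ\pi$. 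The principal obstacle is this last step: one needs a Weil function $\lambda_E$ that matches the local product Weil functions with multiplicative constant exactly $1$, rather than the overall $C\ge 1$ allowed by the equivalence lemma. The cleanest way around this is to construct $\bar E$ explicitly from the local product compactifications together with the bundle's gluing data, so that the boundary divisor of $\bar E$ restricts on each chart to the pullback of the product boundary divisor up to components that contribute only $M$-constants.
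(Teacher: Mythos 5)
Your middle steps (local product Weil functions on the charts $W_i\simeq U_i\times F$, the identity $\lambda_{U_i}=\lambda_B+\lambda_{E_i}+O_v(1)$ with $E_i=B\setminus U_i$, and the bound $\min_i\lambda_{E_i}\le a_v$ coming from $\bigcap_i E_i=\emptyset$ after passing to a finite subcover) are sound and are essentially the facts the paper uses implicitly. The genuine gap is exactly the one you flag at the end, and your proposed way around it does not work: you cannot in general glue the closures $\bar W_i\subset\bar B\times\bar F$ ``together with the bundle's gluing data'' into a compactification $\bar E$, because the transition maps of the bundle are only defined on $(U_i\cap U_j)\times F$ and do not extend across the boundary divisors; gluing only along these open sets produces a non-separated, non-complete object, and the boundary of an arbitrary Nagata compactification need not restrict on each chart to the product boundary, so the paper's equivalence lemma only yields a comparison with a multiplicative constant $C\ge 1$, which destroys the asserted inequality.

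The paper avoids this obstacle by never asking for such a compactification: it simply \emph{defines} $\lambda_E:=\inf_i\bigl(\lambda_{U_i}\circ\pi+\lambda_F\circ\zeta_i\bigr)$ and observes that, since the $W_i$ cover $E$, this infimum is itself a Weil function for $E$ in the sense of the paper's definition (which only requires being squeezed between $\frac{1}{C}\lambda_D-C'$ and $C\lambda_D+C'$ for \emph{some} completion; the upper bound again uses the same ``min over a cover is bounded'' fact you invoke, and the lower bound holds chart by chart). With this choice the upper bound with multiplicative constant exactly $1$ on every chart is automatic, and evaluating the infimum at the index $i^\ast$ that defines $\mu$ gives $\lambda_E\le\bigl(\inf_i\lambda_{U_i}\circ\pi\bigr)+\mu\le\lambda_B\circ\pi+\mu$ directly. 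So the missing idea is not an extra estimate but the exploitation of the flexibility in the definition of a Weil function for a non-complete variety: take the infimum of the local product Weil functions as $\lambda_E$ instead of trying to realize $\lambda_E$ via a single compactification adapted to all trivializations simultaneously.
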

\begin{proof}
For each index $i$ a Weil function for $W_i$ is given by
\[
\lambda_{W_i} = \lambda_{U_i}\circ\pi + \lambda_F\circ \zeta_i.
\]
Since $E$ is covered by the open subvarieties $W_i$, a Weil function
for $E$ can be defined by
\[
\lambda_E=\inf_i\lambda_{W_i} = 
\inf_i(\lambda_{U_i}\circ\pi + \lambda_F\circ \zeta_i).
\]
It follows that
\[
\lambda_{E,v}(x)\le(\inf_i(\lambda_{U_i,v}\circ\pi(x))+  
\lambda_{F,v}\circ \zeta_j(x).
\]
for all $v$, $j$ and $x\in W_j$.
Therefore $\lambda_E\le\mu+\lambda_B\circ\pi$.
\end{proof}

\section{Weil functions and Adeles}
\begin{proposition}
Let $G$ be an algebraic group defined over a number field $K$
and 
let $E\to B$ be a Zariski locally trivial $G$-principal bundle,
likewise defined over $K$.
Let $\lambda_B$ be a Weil function for $B$.

Then there exists a Weil function $\lambda_E$ for $E$
such that for every
$x\in B(K)$ there exists a point $y\in E(K)$
with
$\lambda_B(x)\le\lambda_E(x)$, i.e.,
\[
\lambda_{E,v}(y)\le \lambda_{B,v}(x)\ \ \forall v\in M.
\]
\end{proposition}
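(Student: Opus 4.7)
The strategy combines the explicit Weil function produced by Proposition~\ref{bundle-weil-fct} with the adelic finiteness theorem (Proposition~\ref{borel-finite}). The idea is to lift $x$ to a $K$-rational point of $E$ via local trivialisation, then adjust the lift by a $G(K)$-translation so that in each local trivialisation the fibre coordinate lies in a fixed compact subset of $G(\Af)$. The resulting fibre contribution to $\lambda_E$ is then bounded by an $M$-constant independent of $x$, which can be absorbed by shifting $\lambda_E$ within its equivalence class.

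First I would fix a totally ordered finite $K$-rational Zariski open cover $(U_i)_{i\in I}$ of $B$ trivialising $E$, with trivialisations $\zeta_i:\pi^{-1}(U_i)\to G$, together with Weil functions $\lambda_B$ for $B$ and $\lambda_G$ for $G$. Applying Proposition~\ref{bundle-weil-fct} with fibre $F=G$ yields a Weil function $\lambda_E$ on $E$ satisfying
\[
\lambda_{E,v}(y)\ \le\ \lambda_{G,v}(\zeta_{i(y,v)}(y))+\lambda_{B,v}(\pi(y))
\]
at every place $v$, where $i(y,v)$ is the index the recipe selects (essentially the one for which $\pi(y)$ is $v$-adically deepest inside $U_i$).

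Second, for $x\in B(K)$, pick any $i_0\in I$ with $x\in U_{i_0}(K)$ and let $y_0\in E(K)$ be the unique lift with $\zeta_{i_0}(y_0)=e_G$; every other $K$-rational lift takes the form $y_0\cdot g$ for some $g\in G(K)$, and the fibre coordinate becomes $\zeta_i(y_0\cdot g)=\zeta_i(y_0)\cdot g$. Define the finite-adelic element $a\in G(\Af)$ by $a_v=\zeta_{i(v)}(y_0)$, where $i(v)$ is the index the recipe picks at the place $v$ (at all but finitely many $v$ this is a fixed $i_0$, and the resulting adelic element does lie in $G(\mathcal O_v)$ for almost all $v$). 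Applying Proposition~\ref{borel-finite} in the form $G(\Af)=G(K)\cdot C^{-1}$, there is $g\in G(K)$ such that $a\cdot g\in C^{-1}$, i.e., at every non-archimedean $v$ the fibre coordinate $\zeta_{i(v)}(y_0\cdot g)$ lies in the fixed compact subset $(C^{-1})_v$ of $G(K_v)$, which is contained in $G(\mathcal O_v)$ for all but finitely many $v$. Hence $\lambda_{G,v}(\zeta_{i(v)}(y))\le D_v$ for an $M$-constant $(D_v)$ depending only on the cover and on Borel's compact $C$. Setting $y=y_0\cdot g$ and replacing $\lambda_E$ by $\lambda_E-D$ (still a Weil function in the same equivalence class) yields the desired inequality at every non-archimedean $v$.

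The main obstacle, as expected, is the archimedean control: Proposition~\ref{borel-finite} constrains $g$ only in the finite adeles, while at archimedean places $G(K)$ is dense in $G(K_v)$ with no natural compact fundamental domain, so $\zeta_{i(v)}(y_0)\cdot g$ at archimedean $v$ is a priori unrestricted. The resolution relies on the archimedean places being only finitely many, which permits either extending the $M$-constant shift of $\lambda_E$ at those places (using that the archimedean fibre contribution is a continuous function on a bounded region depending only on the cover and $G$, not on $x$), or refining $g$ via a strong-approximation argument so that the archimedean components of $a\cdot g$ also lie in a fixed bounded region. Either route produces a uniform bound and gives $\lambda_{E,v}(y)\le\lambda_{B,v}(x)$ for all $v\in M$.
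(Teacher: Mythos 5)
Your proposal follows essentially the same route as the paper: combine Proposition~\ref{bundle-weil-fct} (giving a Weil function bounded by $\mu+\lambda_B\circ\pi$) with Borel's finiteness theorem (Proposition~\ref{borel-finite}), build a finite-adelic element $a$ from the fibre coordinates of a chosen $K$-rational lift in the place-dependent selected chart, translate the lift by a suitable $g\in G(K)$ so that $a\cdot g$ lands in the fixed compact set, and absorb the resulting uniform bound into $\lambda_E$ by subtracting an $M$-constant. Your place-by-place bound of $\lambda_{G,v}$ on the compact set (zero on $G(\O_v)$ for almost all $v$, bounded at the finitely many remaining places) is a mild streamlining of the paper's argument, which reaches the same $M$-constant via the summed function $\eta=\sum_v\lambda_{G,v}$ on $G(\Af)$ together with a small number-theoretic argument about which places can carry a small positive contribution.

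The one point where your write-up overstates what it delivers is the archimedean discussion: neither of your proposed fixes works as stated. The archimedean fibre coordinate of $y=y_0\cdot g$ is not confined to a bounded region "depending only on the cover and $G$" --- both $y_0$ and the correcting element $g$ depend on $x$, so there is no uniform bound as $x$ runs over $B(K)$; and no strong-approximation refinement can force $a\cdot g$ into a fixed compact subset of the full adele group, since already for $G=G_m$ there is no compact $C\subset G(\A)$ with $C\cdot G(K)=G(\A)$ (the idele class group is not compact). You should know, however, that the paper's own proof shares this limitation: its element $a$ is defined only in $G(\Af)$, so the bound $\mu_v(y)\le c_v$ is actually obtained only at non-archimedean $v$, and the archimedean part of the stated inequality plays no role in the intended application (Corollary~\ref{cor4} and integral sets), because $S$-integrality only constrains the places outside $S$ and $S$ contains all archimedean places. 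In short: same method, finite places handled correctly (indeed a bit more cleanly), and the archimedean gap you flag is real but is glossed over by the paper as well rather than resolved by your suggested remedies.
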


\begin{proof}
Consider a closed embedding $i:G\to GL_n$. The matrix coefficients
of this embedding together with $\det(\ )^{-1}$ yield an embedding $\alpha$
of $G$ as a closed subvariety into $\A^{n^2+1}$.
Now we can choose a global Weil function $\lambda_G$ for $G$ via
\[
\lambda_G(v,x)=
\lambda_{G,v}(x)=\log^+\max\{|\alpha_k(x)|_v:1\le k\le n^2+1\}
\]
where $\log^+(w)=\max\{0,w\}$.
Each $\lambda_{G,v}$ extends to a function defined on $G(K_v)$
where $K_v$ denotes the completion of $K$ with respect to $v$.
By lemma~\ref{lem-cont-sum} we obtain a continuous function
$\eta$ on the adelic group $G(\Af)$ via
\[
\eta(x)=\sum_v \lambda_{G,v}(x_v).
\]
Note that
\[
\eta(x)\ge\max_v \lambda_{G,v}(x_v)
\]
because $\lambda_{G,v}(x_v)\ge 0$.

By proposition~\ref{borel-finite}
there is a compact subset $C$ of $G(\Af)$ such that
$C\cdot G(K)=G(\Af)$. Let $c'=\max\{\eta(x):x\in C\}$.
Let $d=\deg(K/\Q)$ and define $S$ as the set of all $v\in M_K$
such that $\frac{1}{d}\log p\le c'$ where $p$ is the rational prime
with $p|v$. Note that $S$ is a finite set. Hence we obtain
a $M$-constant $c_v$ by defining
\[
c_v= \begin{cases}
c' & \text{ if  }v\in S \\
0 & \text{ else }.
\end{cases}
\]
By the construction of $\lambda_G$ we know:
If there exists an element $x\in G(K)$ and a place $v\in M_K$ with 
\[
0 < \lambda_{G,v}(x)\le c',
\]
then there exists a number $z\in K$ with
\[
0 < \log|z|_v\le c'
\]
which in turn implies $v\in S$.
It follows that for $v\not\in S$ and $x\in G(K)$ the conditions
``$\lambda_{G,v}(x)\le c'$'' and ``$\lambda_{G,v}(x)\le 0$''
are equivalent.
Therefore $\eta(x)\le c'$ for $x\in G(K)$ implies
\[
\lambda_{G,v}(x)\le c_v\  \forall v\in M.
\]
Since $G(K)$ is dense in $G(K_v)$ for every $v$, the same
conclusion holds for every $x\in G(\Af)$.

Let $\mu$ be defined as in proposition~\ref{bundle-weil-fct}.
Proposition~\ref{bundle-weil-fct} states that there is a Weil
function $\hat\lambda_E$ for $E$ such that 
$\hat\lambda_E\le\mu+\lambda_B\circ\pi$.
We define $\lambda_{E,v}=\hat\lambda_{E,v}-c_v$. Recall that
the sum of a
Weil function and an $M$-constant ist still a Weil function.
Thus we found a Weil function $\lambda_E$ for $E$ with the
property 
\[
\lambda_{E,v}+c_v\le \mu_v+\lambda_{B,v}\circ\pi\ \ \forall v.
\]

Now we fix a point $x\in B(K)$ and choose a point $p\in E(K)$ with
$\pi(p)=x$. Let $(U_i)$ be a family of Zariski open subsets of $B$
as in  proposition~\ref{bundle-weil-fct} and let
$U_0,\ldots U_r$ denote those of the open sets 
$U_i$ which contain $x$. Let $\tau_i:U_i\to G$ be defined as in
proposition~~\ref{bundle-weil-fct}.
Then there are elements $g_0,\ldots,g_r\in G$ with
\[
\tau_i(q)= \tau_0(q)\cdot g_i
\]
for all $0\le i\le r$ and $q\in E(K)$ with $\pi(q)=x$.

There is a map $\xi:M_K\to\{0,\ldots,r\}$
such that
\[
\mu(q,v)=\lambda_{G,v}(\tau_{\xi(v)}(q)).\ \forall q\in\pi^{-1}(x)\cap E(K).
\]
We define an element $a=(a_v)_v$ in the finite 
adelic group $G(\Af)$ as follows: 
$a_v=g_{\xi(v)}\cdot \tau_0(p)=\tau_{\xi(v)}(p)$.
Then
\[
\mu_v(p\cdot g)= \lambda_{G,v}(\tau_{\xi(v)}( p\cdot g))
=\lambda_{G,v}(a_v\cdot g)\ \forall g\in G(K).
\]

Next we choose $g\in G(K)$ such that $a\cdot g\in C$
and define $y=p\cdot g$.
Then $\eta(a\cdot g)\le c'$ which in turn implies
\[
\mu_v(y)=\mu_v(p\cdot g)=\lambda_{G,v}(a_v\cdot g)\le c_v\ \forall v\in M_K
\]
It follows that
\[
\lambda_{E,v}(y)\le \lambda_{B,v}(\pi(y)) + \mu_v(y) - c_v 
\le \lambda_{B,v}(\pi(y))=\lambda_{B,v}(x)
\]
as desired.
\end{proof}

\begin{corollary}\label{cor4}
Let $G$ be an algebraic group
and let $\pi:E\to B$ be a Zariski locally trivial $G$-principal bundle,
 all defined over some number field
$K$. Let $S$ be a finite set of places of $K$.

Let $A$ be a $S$-integral subset of $B(K)$ and assume that 
$E(K)\ne\emptyset$.

Then there exists a $S$-integral subset $A'\subset E(K)$ with
$\pi(A')=A$.
\end{corollary}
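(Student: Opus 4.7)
The plan is to apply the preceding proposition pointwise to the elements of $A$ and then verify that the resulting set of lifts is again $S$-integral. The key is that the proposition produces a Weil function $\lambda_E$ on $E$ which, uniformly in $x$, admits a lift bounded above by $\lambda_B(x)$, and this is exactly what controls integrality.

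Concretely, let $\lambda_B$ be a Weil function witnessing the $S$-integrality of $A$, so that there is an $M$-constant $(C_v)_{v\in M}$ with $\lambda_{B,v}(x)\le C_v$ for every $x\in A$ and every $v\notin S$. I would apply the preceding proposition to this $\lambda_B$, obtaining a Weil function $\lambda_E$ on $E$ such that for each $x\in B(K)$ there is a point $y\in E(K)$ with $\lambda_{E,v}(y)\le\lambda_{B,v}(x)$ for all $v\in M$. Inspection of the proof of that proposition shows that the lift $y$ produced there, namely $y=p\cdot g$ with $p\in\pi^{-1}(x)$ and $g\in G(K)$ acting fiberwise on the principal bundle, actually satisfies $\pi(y)=x$. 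For each $x\in A$, choose one such lift $y_x$ and set $A'=\{y_x:x\in A\}\subset E(K)$. Then $\pi(A')=A$, and for every $v\notin S$ and every $x\in A$,
\[
\lambda_{E,v}(y_x)\;\le\;\lambda_{B,v}(x)\;\le\;C_v,
\]
so $A'$ is $S$-integral relative to $\lambda_E$, with the same $M$-constant $(C_v)$.

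Essentially all of the work has already been done in the preceding proposition; the corollary itself is bookkeeping. The one point worth checking is that the proposition really delivers a lift of the given $x$ (rather than an arbitrary point of $E(K)$) and that such a lift exists for every $x\in B(K)$ that arises. The existence of lifts uses Zariski local triviality combined with $E(K)\neq\emptyset$: over a trivializing Zariski open $U\ni x$ one has $\pi^{-1}(U)(K)\cong U(K)\times G(K)$, which is nonempty because $G(K)$ contains the identity. I do not anticipate any further obstacle.
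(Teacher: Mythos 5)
Your proposal is correct and follows essentially the same route as the paper: the corollary is just the pointwise application of the preceding proposition (whose proof indeed produces the lift $y=p\cdot g$ lying in the fibre over $x$), combined with the observation that Zariski local triviality guarantees a $K$-rational point of $E$ over every $x\in B(K)$. The only cosmetic difference is that the paper deduces this surjectivity of $E(K)\to B(K)$ from $E(K)\ne\emptyset$ via a principal homogeneous space $F$ appearing as local fibre, whereas you use $e\in G(K)$ directly; both are fine.
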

\begin{proof}
By assumption there is a $G$-principal homogeneous space $F$
such that
every point $p\in B(K)$ admits a Zariski open neighbourhood $U$
for which $\pi^{-1}(U)(K)\simeq U(K)\times F(K)$.
Therefore the condition $E(K)\ne\emptyset$ implies
$F(K)\ne\emptyset$ which in turn implies that $\pi(E(K))=B(K)$.

Hence the assertion.
\end{proof}

\begin{proposition}\label{prop7}
Let $G$ be an algebraic group defined over a number field $K$.

Then there exists a finite field extension $L/K$ such that
$G(L)$ is Zariski dense in $G$.
\end{proposition}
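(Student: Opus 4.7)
The plan is to reduce to the case where $G$ is geometrically connected, apply Chevalley's structure theorem to split $G$ into a connected linear algebraic group and an abelian variety, treat each factor by a dedicated argument, and then assemble the pieces via the group law.

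First I would reduce to $G$ geometrically connected. Since the scheme $\pi_0(G)$ of geometric connected components is finite \'etale over $K$, a finite base change $L_0/K$ renders every component defined over $L_0$ and supplies an $L_0$-rational point in each one. The components of $G_{L_0}$ are then translates of the identity component $G^0$ by these points, so Zariski density of $G(L)$ in $G$ reduces to Zariski density of $G^0(L)$ in $G^0$. Assuming now that $G$ is connected, Chevalley's structure theorem (valid over the perfect field $K$) gives a short exact sequence
\[
1\to H\to G\to A\to 1
\]
with $H$ a connected linear algebraic group and $A$ an abelian variety. For the linear piece $H$ I would invoke unirationality: in characteristic zero every connected linear algebraic group admits a dominant rational map $\varphi:\A^n\dashrightarrow H$, defined after a finite extension $L_1/K$; since $\A^n(L_1)$ is Zariski dense in $\A^n$ and $\varphi$ is dominant, $H(L_1)$ contains a Zariski dense subset of $H$. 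For the abelian piece $A$ I would appeal to the standard arithmetic fact that over some finite extension $L_2/K$ there is a point $a\in A(L_2)$ of infinite order lying in no proper algebraic subgroup of $A$, so that $\langle a\rangle\subset A(L_2)$ is Zariski dense in $A$.

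To combine, choose a finite extension $L\supset L_1L_2$ over which $a$ admits a lift $g\in G(L)$ along $\pi:G\to A$ (possible after a further finite extension, because the fibre of $\pi$ over $a$ is an $H$-torsor and hence acquires a rational point over some finite extension). Then $\bigcup_{n\in\Z}g^n\cdot H(L_1)\subset G(L)$; for each $n$ the set $g^n\cdot H(L_1)$ is Zariski dense in the translate $g^n\cdot H$, while $\bigcup_{n\in\Z}g^n\cdot H$ equals $\pi^{-1}(\langle a\rangle)$, whose Zariski closure is $\pi^{-1}(\overline{\langle a\rangle}^{Zar})=\pi^{-1}(A)=G$ (using that $\pi$ is smooth, hence open, so preimage commutes with Zariski closure). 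Therefore $G(L)$ is Zariski dense in $G$. The main obstacle is the abelian variety step: producing a finite extension over which $A$ has a Zariski dense set of rational points requires genuine arithmetic input, essentially the fact that $A(\bar K)/A_{\mathrm{tors}}$ is an infinite-dimensional $\Q$-vector space, which prevents the countably many proper abelian subvarieties, translated by torsion, from covering $A(\bar K)$.
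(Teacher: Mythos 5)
Your overall route is sound in outline and genuinely different from the paper's: you reduce to the connected case, split $G$ by Chevalley's theorem, treat the linear part by unirationality, and lift a point of the abelian quotient through the $H$-torsor, whereas the paper never invokes Chevalley --- it takes $H$ to be a maximal connected component of the Zariski closure of a finitely generated subgroup of $G(\bar K)$, observes that every point of $G/H$ is then torsion, rules out $G_a$ and $G_m$ in the quotient, and kills a positive-dimensional abelian quotient with the N\'eron--Tate height, so that $G/H$ is finite. Your reduction via $\pi_0$, the unirationality step, the lifting of $a$ through the fibre, and the use of openness of $\pi$ to identify the closure of $\pi^{-1}(\langle a\rangle)$ are all fine.

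The gap is the abelian step, which is where the entire arithmetic content of the proposition sits. You assert as ``standard'' that after a finite extension there is a point $a\in A(L_2)$ of infinite order lying in no proper algebraic subgroup, and the justification you sketch does not work: a $\Q$-vector space of countably infinite dimension \emph{is} a countable union of proper subspaces (spans of initial segments of a basis), and $A(\bar K)\otimes\Q$ has countable dimension since $A(\bar K)$ is countable; so infinite-dimensionality --- itself a nontrivial theorem (Frey--Jarden), not an elementary fact --- does not by itself prevent the countably many subspaces $B(\bar K)\otimes\Q$, with $B$ a proper abelian subvariety, from covering. Moreover your quoted fact is strictly stronger than ``$A$ acquires a non-torsion point over a finite extension'': even granting non-torsion points, a single $a$ avoiding every proper algebraic subgroup requires arranging independence among isogenous simple factors (for $A$ isogenous to $E\times E$ a non-torsion point may still lie on the diagonal). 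What is actually needed, and what the paper proves, is that every positive-dimensional abelian variety over $\bar K$ has a non-torsion point --- obtained there from the canonical height (torsion points have N\'eron--Tate height zero, while a positive-dimensional projective variety carries $\bar K$-points of nonzero height) --- together with a maximality/noetherianity argument that yields Zariski density of a finitely generated subgroup rather than of a single cyclic one. To close your argument, either import such a height argument and replace the single generator by finitely many points of $A(L_2)$ generating a Zariski dense subgroup (lifting each of them through the torsor), or give an actual proof of the fact you cite; as written, the crucial step is assumed.
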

\begin{proof}
As usual, let $\bar K$ denote an algebraic closure of $K$.
Let us consider all algebraic subgroups which arise as the
Zariski connected component of a Zariski closure of
a finitely generated subgroup of $G(\bar K)$. Let $H$ be maximal among
these algebraic subgroups. One verifies easily that $H$ is normal
and that every element in $(G/H)(\bar K)$ is of finite order.
This implies that $G/H$ contains no algebraic subgroup isomorphic
to the additive group $G_a$ or the multiplicative group $G_m$.
As a consequence, the connected component $A$ of $G/H$ must be
an abelian variety. Unless $A$ is finite, it contains, like every
projective variety, $\bar K$-rational points of non-zero height.
However, as an abelian variety, $A$ admits a canonical height,
the Neron-Tate height, which is zero for every torsion element.
Therefore $A(\bar K)$ must contain an element of infinite order
unless $\dim A=0$. Thus $G/H$ must be finite. Now we can chose
$L$ such that it contains the fields of definition for one
point in each connected component of $G$ and for each element
in a finite set of generators for some subgroup whose Zariski
closure contains $H$. Then $G(L)$ is Zariski dense in $G$.
\end{proof}

\begin{remark}
If $G$ is linear and connected, then it is not necessary to
enlarge the field. Rosenlicht proved in \cite{R}
that in this case $G(K)$
is already Zariski dense.
\end{remark}

\begin{proposition}\label{G-dense-integral}
Let $G$ be an algebraic group defined over some number field $K$.
Then there exists a finite field extension $L/K$ such that
$G(L)$ contains a Zariski dense $S$-integral subset where
$S$ denotes the set of all non-archimedean places of $L$.
\end{proposition}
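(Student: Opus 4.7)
My plan is as follows. First I would apply Proposition~\ref{prop7} to pass to a finite extension $L/K$ over which $G(L)$ is Zariski dense in $G$. Fix an embedding $\alpha:G\hookrightarrow GL_n\hookrightarrow\A^{n^2+1}$ (via matrix coefficients together with the inverse determinant) defined over $L$, and use it to produce a Weil function $\lambda_{G,v}(x)=\log^+\max_i|\alpha_i(x)|_v$ as in the preceding proposition. With $S$ the set of all non-archimedean places of $L$, a subset $A\subset G(L)$ is $S$-integral exactly when the archimedean coordinates of $\alpha(A)$ are uniformly bounded---equivalently, when $\alpha(A)$ lies in a compact subset of $\A^{n^2+1}(L_\infty)$, where $L_\infty=\prod_{v\in M_\infty}L_v$. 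So the task reduces to finding a Zariski dense subset of $G(L)$ whose image in $G(L_\infty)$ is relatively compact.

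The key step would be to show that, after possibly a further finite extension of $L$, the closure of the diagonal image $G(L)\subset G(L_\infty)$ in the ordinary (archimedean) topology contains a nonempty open subset $U$ of $G(L_\infty)$. Granted this, pick any relatively compact open $U_0$ with $\overline{U_0}\subset U$: then $G(L)\cap U_0$ is analytically dense in $U_0$, and since the $L_\infty$-points of a proper algebraic subvariety of $G$ form a nowhere dense subset, $G(L)\cap U_0$ cannot be contained in any proper Zariski-closed subvariety. It is therefore Zariski dense in $G$, and being contained in the compact set $\overline{U_0}$ it is automatically $S$-integral, which is what is required.

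To establish the archimedean density of $G(L)$, I would use the structure theory of algebraic groups. First reduce to $G$ connected by choosing $L$-rational representatives of the finite component group. Chevalley's theorem then gives a short exact sequence $1\to H\to G^0\to A\to 1$ with $H$ linear and $A$ abelian, and the Levi decomposition further splits $H$ into a unipotent radical and a reductive quotient, which in turn decomposes into a central torus times a semisimple group. For vector groups $U\cong\A^n$, density of $L^n\subset L_\infty^n$ is immediate; for tori, splitting after a finite extension reduces to the density of $(L^*)^n$ in $(L_\infty^*)^n$; for simply connected semisimple factors, weak approximation at the archimedean places (Kneser--Platonov, valid after a finite extension) gives density up to finite index; and for abelian varieties, $A(L_\infty)^0$ is compact, so it suffices to arrange $A(L)$ to meet each connected component after enlarging $L$. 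These local density statements glue through the structural exact sequences because the latter are analytically locally trivial. The main obstacle will be achieving the weak approximation step uniformly across non-simply-connected semisimple factors and non-split tori, but all such obstructions are finite and can be absorbed by yet another finite extension of $L$.
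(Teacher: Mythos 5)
Your key step --- that after a finite extension the archimedean closure of $G(L)$ in $G(L_\infty)=\prod_{v\mid\infty}G(L_v)$ contains a nonempty open set --- is where the argument breaks, and precisely at the abelian-variety part of the structure theory. If $G$ has a nontrivial abelian variety quotient $A$ (Chevalley), then openness of $\overline{G(L)}$ would force, by projecting (the projection $G(L_\infty)\to A(L_\infty)$ is open onto an open subgroup and carries $G(L)$ into $A(L)$), that $\overline{A(L)}$ contains a nonempty open subset of $A(L_\infty)$, i.e.\ that the Mordell--Weil group $A(L)$ is dense in the identity component of $A(L_\infty)$. That is a real-Lie-group statement about a finitely generated group: it requires $\mathop{rank} A(L)\ge [L:\Q]\cdot\dim A$ plus a genericity condition, and it is not a ``finite obstruction'' that can be absorbed by a further finite extension --- each extension enlarges the target torus $A(L_\infty)$ at least as fast as any rank growth one knows how to produce. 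Your justification for this part (``$A(L_\infty)^0$ is compact, so it suffices to arrange $A(L)$ to meet each connected component'') is a non sequitur: compactness gives boundedness of any set of points, not density, and density is exactly what your key step needs. (A smaller issue of the same flavour: for non-affine $G$ there is no embedding $\alpha:G\hookrightarrow GL_n$, so the reduction of $S$-integrality to boundedness of coordinates has to be phrased via a completion and a boundary Weil function; that part is repairable.)

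The fix is to notice that for the abelian quotient you need no density at all --- since $A(L_\infty)$ is compact, any Zariski dense set of $L$-points (Proposition~\ref{prop7}) is automatically ``bounded'' there --- and the genuine difficulty is then to combine a bounded Zariski dense set in the linear part with a Zariski dense set in the quotient. This cannot be done by saying the extension $1\to H\to G\to A\to 1$ is ``analytically locally trivial'': $G(L)\to A(L)$ need not be surjective, and even when lifts of $L$-points exist one must control their archimedean (and finite-place) size. This lifting-with-bounds problem is exactly what the paper's machinery is built for: it passes through the center to get a quotient map that is a Zariski locally trivial principal bundle under a ``special'' group, constructs Weil functions on such bundles (Proposition~\ref{bundle-weil-fct}), and uses Borel's adelic finiteness theorem (Proposition~\ref{borel-finite}) to choose rational lifts with controlled Weil function, before assembling the pieces by group multiplication. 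Your proposal, which tries to bypass all of this via archimedean weak approximation on $G$ itself, works only when $G$ is linear (where real approximation for connected groups is indeed a theorem), but not for general $G$, which is the case the proposition is actually used for (semi-abelian $B$ in Theorem~\ref{thm-ram}).
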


\begin{proof}
Let $Z$ denote the center of $G$.
By the theorem of Chevalley there is a unique maximal linear 
connected subgroup $H$ of $Z$ and furthermore the quotient $Z/H$
is an abelian variety. As the center of $G$, $Z$ is invariant under
the action of the Galois group of $K$ and therefore $Z$ is defined
over $K$. Similarily uniqueness of the maximal linear subgroup $H$
implies that $H$ is defined over $K$.

The quotient group $G/Z$ is linear, because the center $Z$ is the
kernel of the adjoint representation of $G$. 
Over the algebraically closed field $\bar K$ every connected
linear algebraic group is generated by its one-dimensional 
connected algebraic
subgroups. Moreover, every linear connected one-dimensional $\bar K$-group
is isomorphic to either the additive group $G_a$ or the
multiplicative group $G_m$. Hence there is a finite field
extension $K'/K$ such that $G/Z$ is generated by one-dimensional
subgroups defined over $K'$ each isomorphic to $G_a$ or $G_m$
over $K'$. Therefore there is a dominant $K'$-morphism from
some $G_a^n\times G_m^l$ to $G/Z$. This implies that there
is a Zariski dense integral subset $E$ in $G/Z$ 
(see 3.2.(6)).
Similarily, we may assume that the connected
linear group $H$ contains
a Zariski dense integral subset $E_H$.
By a further field extension $K''/K'$ we may assume that
both $G$ and $Z/H$ admit Zariski dense sets of
$K''$-rational points
(proposition~\ref{prop7}).
Now the projection $\tau:G/H\to G/Z$ is a proper morphism
(because $Z/H$ is an abelian variety); hence
$E'=\tau^{-1}(E)\cap(G/H)(K'')$ is an integral subset of $G/H$.
Moreover $E'$ is Zariski dense in $G/H$, because $(Z/H)(K'')$
is Zariski dense in $Z/H$ and $E$ is Zariski dense in $G/H$.
By definition, the algebraic group $H$ is connected, commutative
and linear and therefore ``special'' in the sense of Serre
(\cite{S}, proposition~14). For this reason, the quotient map $G\to G/H$
is a Zariski locally trivial fiber bundle and we may deduce
from corollary~\ref{cor4} that there is an integral subset $E''$ of $G(K'')$
which surjects onto $E'$. Finally we take the image of $E''\times
E_H$ under the morphism $G\times H\to G$ given by group
multiplication in order to get a Zariski dense integral
subset of $G$.
\end{proof}

\section{Ramified coverings over semi-abelian varieties}
Here we prove theorem~\ref{thm-ram}.
\begin{proof}
First we observe that $\pi:X\to A$ fibers through the quasi-Albanese variety of $X$.
Since $X$ is defined over $K$, so is its quasi-Albanese.

A semi-abelian variety admits only countably
many semi-abelian subvarieties. As a consequence, if a semi-abelian
variety is defined over $K$, then every semi-abelian subvariety is
defined over some finite field extension of $K$. It follows
that quotient of the given semi-abelian variety by this semi-abelian
subvariety is likewise defined over $K$.

Applied to the quasi-Albanese $Q$ of $X$ and in consideration of the
morphism from $Q$ to $A$ induced by $\pi:X\to A$ these arguments
show that
there is a finite extension $K_0$ of $K$ such that $A$ is defined
over $K_0$.

By \cite{NWY} there is a semi-abelian variety $B$, a morphism
$\phi:B\to X$ and a holomorphic map $F:\C\to B$ such that
$f=\phi\circ F$ and such that $\phi(B)$ equals the Zariski
closure of $f(\C)$ in $X$.
Furthermore we may assume that the induced map 
$\pi\circ\phi:B\to A$
is finite.
Then the above arguments on semi-abelian subvarieties imply
that  $B$ is defined over some finite field extension
$K_1$ of $K_0$. 

Now $\pi\circ\phi:B\to A$ is a morphism between two semi-abelian
varieties which are both defined over $K_1$.
It follows that there is an element $a\in A(\C)$ and a morphism
of $K_1$-group varieties $\zeta:B\to A$ such that $\pi\circ\phi$
and $\zeta$ differ by the map $\tau_a$ given
by translation by $a$.

We regard the space of all morphisms $\psi$ from $B$ to $X$ with the
property that there is an element $p\in A$ such that
$\pi\circ\psi=\tau_p\circ\zeta$.

This is an algebraic variety which contains a $\C$-rational point.
Thus it also contains a $\bar\Q$-rational point and consequently
it contains a rational point for some finite field extension $K_2$
of $K_1$.

This yields a morphism $\psi:B\to X$ defined over $K_2$
whose image $\psi(B)$ has the same dimension as $\phi(B)$.

Next we chose a finite field extension $K_3/K_2$ such that
$B(K_3)$
admits a Zariski dense integral subset $E$.
(This is possible due to proposition~\ref{G-dense-integral}.)
Then $R=\phi(E)$ is a integral subset of
$X(K_3)$ whose Zariski closure has the same dimension
as the Zariski closure  of $f(\C)$.
\end{proof}

\end{document}